\newtheorem{thm}{Theorem}
\newtheorem*{thm*}{Theorem}
\newtheorem{lem}[thm]{Lemma}
\newtheorem{cor}[thm]{Corollary}
\newtheorem{clm}[thm]{Claim}
\newcommand{\llll}[1] {\left #1}
\newcommand{\rrrr}[1] {\right #1}
\newcommand{\dddd}[2]{\dfrac{#1}{#2}}
\newcommand{\aaaa}{\alpha}
\newcommand{\ssss}{\sigma}
\newcommand{\dddddd}{\delta}
\newcommand{\llllll}{\lambda}
\newcommand{\bbbb}{\beta}
\newcommand{\GGGG}{\Gamma}
\newcommand{\gggg}{\gamma}
\newcommand{\oooo}{\omega}
\newcommand{\zzzz}{\zeta}
\begin{document}
\nocite{*}
\title{{\bf \normalsize 	APPROXIMATIONS FOR THE CAPUTO DERIVATIVE (I)}}
\author{Yuri Dimitrov\\
Department of Applied Mathematics and Statistics \\
University of Rousse, Rousse  7017, Bulgaria\\
\texttt{ymdimitrov@uni-ruse.bg}}
\maketitle
\begin{abstract} 
In this paper we construct approximations for the Caputo derivative of order $1-\aaaa,2-\aaaa,2$ and $3-\aaaa$. The approximations have weights  $0.5\llll((k+1)^{-\aaaa}-(k-1)^{-\aaaa}\rrrr)/\GGGG(1-\aaaa)$ and $k^{-1-\aaaa}/\GGGG(-\aaaa)$, and the higher accuracy is achieved by modifying the initial and last weights using the expansion formulas for the left and right endpoints.   The approximations are applied for computing the numerical solution of ordinary fractional differential equations. The properties of the weights of the approximations of order $2-\aaaa$ are similar to the properties of the $L1$ approximation. In all experiments presented in the paper the accuracy of the numerical solutions  using the approximation of order $2-\aaaa$ which has weights $k^{-1-\aaaa}/\GGGG(-\aaaa)$ is higher than the accuracy of the numerical solutions using the $L1$ approximation for the Caputo derivative.

\noindent
{\bf 2010 Math Subject Classification:} 26A33, 34A08, 34E05, 41A25\\
{\bf Key Words and Phrases:}  fractional derivative, approximation,  Fourier transform, fractional differential equation.
\end{abstract}
\section{Introduction}
The Gr\"unwald formula approximation and the $L1$ approximation of the Caputo derivative have been regularly used for numerical solution of fractional differential equations \cite{JinLazarovZhou2016,LinXu2007,Ma2014,Murio2008,Podlubny1999}. The Gr\"unwald formula approximation has weights $(-1)^k\binom{\aaaa}{k}$ and accuracy $O(h)$. The $L1$ approximation \eqref{3_10} has order $2-\aaaa$ and weights $\ssss_k^{(\aaaa)}=\llll((k-1)^{1-\aaaa}-2k^{1-\aaaa}+(k+1)^{1-\aaaa}\rrrr)/\GGGG(2-\aaaa)$. The  weights of the $L1$ approximation are linear combinations of terms which have  power $1-\aaaa$. In the present paper we construct approximations of the Caputo derivative whose weights consist of terms which have power $-\aaaa$ and $-1-\aaaa$. The accuracy of the numerical solution of order $2-\aaaa$ is influenced by the coefficient of the term $y''(x)h^{2-\aaaa}$ in the expansion of the approximation. In Table 1, Table 3 and Table 9 we compute the error and the order of the numerical solutions of the fractional relaxation equation which use approximations \eqref{3_10},\eqref{5_30} and \eqref{6_20} of the Caputo derivative. In all experiments the accuracy of the numerical solutions using approximation \eqref{5_30} is higher than the accuracy of the numerical solutions using approximations \eqref{3_10} and \eqref{6_20}.
The Caputo derivative  of order $\aaaa$, where $0<\aaaa<1$ is defined as
$$y^{(\aaaa)}(x)=D^{\aaaa} y(x)=\dddd{1}{\Gamma (1-\aaaa)}\int_0^x \dfrac{y'(t)}{(x-t)^{\aaaa}}d t.$$
When the first derivative of the function $y$ is a bounded function the Caputo derivative at the initial point $x=0$ is equal to zero. The exponential function has Caputo derivative 
$D^\aaaa e^{\llllll x}=\llllll x^{1-\aaaa}E_{1,2-\aaaa}(\llllll x)$
and
$$D^\aaaa \cos (\llllll x)=\dddd{i \llllll}{2} x^{1-\aaaa}
\llll(E_{1,2-\aaaa}(i \llllll x)-E_{1,2-\aaaa}(-i\llllll x)\rrrr),$$
where $E_{1,2-\aaaa}( x)$ is the Mittag-Leffler function ($\bbbb=2-\aaaa$)
$$E_{\aaaa,\bbbb}(x)=\sum_{k=0}^\infty \dddd{x^k}{\GGGG(\aaaa k+\bbbb)}.$$
The analytical solutions of ordinary and partial differential equations are often expressed with the Mittag-Leffler function. The ordinary fractional differential equation
\begin{equation*} 
y^{(\aaaa)}(x)+D y(x)=F(x),\; y(0)=y_0,
\end{equation*}
has exact solution
$$y(x)= y_0 E_{\aaaa}(-D x^\aaaa)+\int_{0}^{x}\xi^{\aaaa-1} E_{\aaaa,\aaaa}\llll(-D \xi^\aaaa\rrrr)F(x-\xi)d\xi.$$
The finite-difference schemes for numerical solution of ordinary and partial fractional differential equations use an approximation for the fractional derivative.  Let $h=x/n$ and $y_n=y(n h)$ be the value of the function $y$ at the point $x_n=n h$. The  $L1$ approximation of the Caputo derivative is constructed by dividing the interval $[0,x]$ to subintervals of equal length $h$ and approximating the first derivative on each subinterval using a second-order central difference approximation.
\begin{equation}\label{3_10}
y^{(\alpha)}_n  =\dfrac{1}{\GGGG(2-\aaaa)h^\alpha}\sum_{k=0}^{n} \ssss_k^{(\alpha)} y_{n-k}+O\llll(h^{2-\aaaa}\rrrr),
\end{equation}
where $\ssss_0^{(\alpha)}=1$,  $\ssss_n^{(\alpha)}=(n-1)^{1-a}-n^{1-a}$ and
$$\ssss_k^{(\alpha)}=(k+1)^{1-\alpha}-2k^{1-\alpha}+(k-1)^{1-\alpha}, \quad (k=2,\cdots,n-1).$$
  Approximation \eqref{3_10} has accuracy $O(h^{2-\alpha})$ when $y\in C^2[0,x]$ (\cite{LinXu2007}). 	 The numbers $\ssss_k^{(\alpha)}$ have properties 
\begin{align}\label{3_20}
&\ssss_0^{(\alpha)}>0,\; \ssss_1^{(\alpha)}<\ssss_2^{(\alpha)}<\cdots<\ssss_k^{(\alpha)}<\cdots<\ssss_{n-1}^{(\alpha)}<0,\;\ssss_{n}^{(\alpha)}<0,\nonumber\\
&\sum_{k= 0}^n \ssss_k^{(\alpha)} = 0,\quad \sum_{k= 1}^n k \ssss_k^{(\alpha)} = -n^{1-\aaaa},\\
&\ssss_k^{(\alpha)}= \dddd{C_1}{k^{1+\aaaa}}+O\llll(\dddd{1}{k^{2+\aaaa}} \rrrr),\quad \ssss_n^{(\alpha)}= \dddd{C_2}{n^\aaaa}+O\llll(\dddd{1}{n^{1+\aaaa}} \rrrr),\nonumber
\end{align}
where $C_1=\aaaa(\aaaa-1)$ and $C_2=\aaaa-1$. 
{ \renewcommand{\arraystretch}{1.1}
\begin{table}[h]
	\caption{Error and order of numerical solution $NS[1]$ of Equation I with $\aaaa=0.25$, Equation II with $\aaaa=0.5$ and Equation III with $\aaaa=0.75$.}
	\small
	\centering
  \begin{tabular}{ l | c  c | c  c | c  c }
		\hline
		\hline
		\multirow{2}*{ $\quad \boldsymbol{h}$}  & \multicolumn{2}{c|}{{\bf Equation I}} & \multicolumn{2}{c|}{{\bf Equation II}}  & \multicolumn{2}{c}{{\bf Equation III}} \\
		\cline{2-7}  
   & $Error$ & $Order$  & $Error$ & $Order$  & $Error$ & $Order$ \\ 
		\hline \hline
$0.003125$    & $0.0000466$          & $1.6970$  & $0.0000513$           & $1.4857$    & $0.0024184$  & $1.2442$       \\ 
$0.0015625$   & $0.0000143$          & $1.7071$  & $0.0000183$           & $1.4901$    & $0.0010191$  & $1.2468$       \\ 
$0.00078125$  & $4.3\times 10^{-6}$  & $1.7150$  & $6.5\times 10^{-6}$   & $1.4931$    & $0.0004290$  & $1.2482$        \\ 
$0.000390625$ & $1.3\times 10^{-6}$  & $1.7212$  & $2.3\times 10^{-6}$   & $1.4952$    & $0.0001805$  & $1.2490$        \\
\hline
  \end{tabular}
	\end{table}
	}	
	
	 In section 4 we derive the numerical solution $NS[*]$ of the fractional relaxation equation which uses approximation \eqref{20_10} for the Caputo derivative.  In Table 1 we compute the error and the order of numerical solution $NS[1]$ of Equation I and $\aaaa=0.25$, Equation II and $\aaaa=0.5$ and Equation III with $\aaaa=0.75$. In Theorem 9 and Theorem 10  we prove that the numerical solution of the fractional relaxation equation converges to the exact solution with accuracy $O\llll(h^{2-\aaaa}\rrrr)$  when the coefficient $D$ is positive or $D$ is a bounded negative number and the approximation of the Caputo derivative has order $2-\aaaa$ and satisfies  \eqref{3_20}. In section 2 and section 5 we obtain approximations \eqref{5_30} and \eqref{6_20} of the Caputo derivative. Approximations \eqref{3_10},\eqref{5_30} and \eqref{6_20} have order $2-\aaaa$ and satisfy properties \eqref{3_20}. The asymptotic expansions of the  integral approximations for the fractional derivatives involve the values of the  {\it  Riemann zeta function} defined as
		\begin{equation*}
	\zzzz(\aaaa)=
	\displaystyle{\sum_{n=1}^\infty \dddd{1}{n^\aaaa}}, \;(\aaaa>1),
	\quad	\zzzz(\aaaa)=\displaystyle{\dfrac{1}{1-2^{1-\aaaa}}\sum_{n=1}^\infty \dddd{(-1)^{n-1}}{n^\aaaa}}, \;  (\aaaa>0).  
	\end{equation*}
	When $\aaaa<0$, the values of the Riemann zeta function are computed from the the functional equation
$$\zzzz(\aaaa)=2^\aaaa\pi^{\aaaa-1}\sin \llll(\dddd{\pi \aaaa}{2} \rrrr)\GGGG (1-\aaaa)\zzzz (1-\aaaa).$$
Another important special function in fractional calculus is the {\it polylogarithm function} is defined as
$$Li_\aaaa(x)=\sum_{n=1}^{\infty}\dddd{x^n}{n^\aaaa}=x+\dddd{x^2}{2^\aaaa}+\cdots+\dddd{x^n}{n^\aaaa}+\cdots$$
The polylogarithm function satisfies $Li_\aaaa(1)=\zzzz(\aaaa)$ and has  expansion
\begin{align}\label{4_10}
Li_\aaaa(x)=\GGGG(1-\aaaa)\llll(\ln \dddd{1}{x}\rrrr)^{\aaaa-1}+\sum_{n=0}^{\infty}\dddd{\zzzz(\aaaa-n)}{n!}\llll(\ln x\rrrr)^n,
\end{align}
where $\aaaa \neq 1,2,3,\cdots$ and $|\ln x|<2\pi$. 
In \cite{Dimitrov2015}  we obtained the  second-order expansion formula of the $L1$ approximation 
\begin{align}\label{4_15}
\dddd{1}{\GGGG(2-\aaaa)h^\aaaa}\sum_{k=0}^n \ssss_k^{(\aaaa)} y(x-kh)=y^{(\aaaa)}(x)+&\dddd{\zzzz(\aaaa-1)}{\GGGG(2-\aaaa)}y''(x)h^{2-\aaaa}+O\llll(h^2\rrrr),
\end{align}
and the second-order approximation of the Caputo derivative
\begin{equation}\label{4_20}
y^{(\aaaa)}_n=\dddd{1}{\GGGG(2-\aaaa)h^\aaaa}\sum_{k=0}^n \dddddd_k^{(\aaaa)} y_{n-k}+O\llll(h^{2}\rrrr),
\end{equation}
where $\dddddd_k^{(\aaaa)}=\ssss_k^{(\aaaa)}$ for $2\leq k\leq n$ and
$$\dddddd_0^{(\aaaa)}=\ssss_0^{(\aaaa)}-\zzzz(\aaaa-1),\; \dddddd_1^{(\aaaa)}=\ssss_1^{(\aaaa)}+2\zzzz(\aaaa-1),\; \dddddd_2^{(\aaaa)}=\ssss_2^{(\aaaa)}-\zzzz(\aaaa-1).$$
The numbers $\dddddd_k^{(\alpha)}$ satisfy
\begin{equation}\label{4_30}
\dddddd_0^{(\alpha)}>0,\dddddd_1^{(\alpha)}<0,\dddddd_2^{(\alpha)}>0, \dddddd_3^{(\alpha)}<\cdots<\dddddd_k^{(\alpha)}<\cdots<\dddddd_{n-1}^{(\alpha)}<0,\dddddd_{n}^{(\alpha)}<0.
\end{equation}

	The fractional integral of order $\aaaa>0$ on the interval $[0,x]$ is defined as 
$$I^\aaaa y(x)=\dddd{1}{\GGGG(\aaaa)}\int_0^x (x-t)^{\aaaa-1} y(t) d t.
$$
 Denote by $ K^{\aaaa} y(x)$ the fractional integral $  K^{\aaaa} y(x)=\GGGG(\aaaa) I^{\aaaa} y(x)$. In \cite{Dimitrov2016} we use the Fourier transform method and the asymptotic expansion \eqref{4_10} of the polylogarithm function to derive the asymptotic formula for the Riemann sum approximation of the fractional integral $ K^{\aaaa} y(x)$ at the right endpoint.
\begin{align}  h^{\aaaa} \sum_{k=1}^{n-1} &\dddd{y(x-k h)}{k^{1-\aaaa}}=\int_0^x \dddd{y(t)}{(x-t)^{1-\aaaa}}dt +\sum_{k=0}^\infty (-1)^k\dddd{\zzzz(1-\aaaa-k)}{k!}y^{(k)}(x) h^{k+\aaaa}\nonumber\\
&-\GGGG(\aaaa)\sum_{k=0}^\infty \dddd{B_{k+1}}{(k+1)!}\llll( \sum_{m=0}^k (-1)^m\binom{k}{m}\dddd{x^{\aaaa-m-1}}{\GGGG(\aaaa-m)}y^{(m-k)}(0) \rrrr)h^{k+1}.\label{5_10}
\end{align}
The asymptotic expansion formula for the left endpoint of \eqref{5_10} is obtained from the Euler-Maclaurin formula for the function $z(t)=y(t)(x-t)^{\aaaa-1}$.
In section 2, we use \eqref{5_10} to obtain the approximation of the Caputo derivative
\begin{align}\label{5_20}
\dddd{1}{2h^\aaaa}\Bigg(y_n+&\dddd{y_{n-1}}{2^\aaaa} +\sum_{k=2}^{n-2} y_{n-k}\llll(\dddd{1}{(k+1)^\aaaa}-\dddd{1}{(k-1)^\aaaa}  \rrrr)-\dddd{y_{1}}{(n-2)^\aaaa} -\dddd{y_{0}}{(n-1)^\aaaa} \Bigg)\nonumber\\
&=\GGGG(1-\aaaa)y_n^{(\aaaa)}+\zzzz(\aaaa)y'_n h^{1-\aaaa}-\zzzz(\aaaa-1)y''_n h^{2-\aaaa}+O\llll( h^{2} \rrrr).
\end{align}
Approximation \eqref{5_20} has second order accuracy when the function $y$ satisfies $y'(0)=y''(0)=0$. Denote
$$S_n[\aaaa]=\sum_{k=1}^{n-1}\dddd{1}{k^{\aaaa}}-\zzzz(\aaaa).$$
From \eqref{5_20} we obtain approximations \eqref{5_30} and \eqref{6_05} of the Caputo derivative
\begin{align}\label{5_30} 
\dddd{1}{2\GGGG(1-\aaaa)h^\aaaa}\sum_{k=0}^{n}\ssss_k^{(\aaaa)} y_{n-k}=y_n^{(\aaaa)}+O\llll( h^{2-\aaaa} \rrrr),
\end{align}
where
$\ssss_0^{(\aaaa)}=1-2\zzzz(\aaaa),\quad \ssss_1^{(\aaaa)}=\dddd{1}{2^\aaaa}+2\zzzz(\aaaa), $
$$\ssss_k^{(\aaaa)}=\dddd{1}{(k+1)^\aaaa}-\dddd{1}{(k-1)^\aaaa},\qquad (k=2,\cdots,n-2),$$
$$\ssss_{n-1}^{(\aaaa)}=-\dddd{1}{(n-2)^\aaaa}-2 S_n[\aaaa]+\dddd{2n^{1-\aaaa}}{1-\aaaa},\; \ssss_{n}^{(\aaaa)}=-\dddd{1}{(n-1)^\aaaa}+2 S_n[\aaaa]-\dddd{2n^{1-\aaaa}}{1-\aaaa}.$$
\begin{align}\label{6_05}
\dddd{1}{2\GGGG(1-\aaaa)h^\aaaa}\sum_{k=0}^{n}\dddddd_k^{(\aaaa)} y_{n-k}=y_n^{(\aaaa)}+O\llll( h^2 \rrrr),
\end{align}
where $\dddddd_0^{(\aaaa)}=1-3 \zzzz(\aaaa)+2 \zzzz(\aaaa-1),\; \dddddd_1^{(\aaaa)}=\dddd{1}{2^\aaaa}+4\zzzz(\aaaa)-4 \zzzz(\aaaa-1),$
\begin{align*}
&\dddddd_2^{(\aaaa)}=\dddd{1}{3^\aaaa}-1-\zzzz(\aaaa)+2\zzzz(\aaaa-1),\;\dddddd_{k}^{(\aaaa)}=\ssss_{k}^{(\aaaa)}, \quad (k=3,\cdots,n).
\end{align*}
In section 5 we use the Fourier transform method and the Euler-Maclaurin formula
for the function $y(t)/(x-t)^{1+\aaaa}$ to derive the  expansion formula
	\begin{align}\label{6_10}
\dddd{1}{h^\aaaa}\sum_{k=1}^{n-1}\dddd{y(x-kh)}{k^{1+\aaaa}}=&\GGGG(-\aaaa)y^{(\aaaa)}(x)+\dddd{ y(0)}{\aaaa x^{\aaaa}}-\dddd{y(0) h}{2 x^{1+\aaaa}}\nonumber\\
&+\sum_{m=0}^\infty\dddd{(-1)^m}{m!}\zzzz(1+\aaaa-m)y^{(m)}(x)h^{m-\aaaa}\\
-\sum_{m=1}^\infty \dddd{B_{2m}}{(2m)!}&\llll( \sum_{k=0}^{2m-1}\binom{2m-1}{k}\dddd{\GGGG(1+\aaaa+k)}{\GGGG(1+\aaaa)}\dddd{y^{(2m-1-k)}(0)}{x^{1+\aaaa+k}}\rrrr)h^{2m}.\nonumber
\end{align}
From \eqref{6_10} we obtain the approximations of the Caputo derivative \eqref{6_20} and \eqref{7_10} of order $2-\aaaa$ and $3-\aaaa$.
\begin{equation}\label{6_20}
\dddd{1}{\GGGG(-\aaaa)h^\aaaa}\sum_{k=0}^{n}\ssss_k^{(\aaaa)} y_{n-k}= y_n^{(\aaaa)}+O\llll(h^{2-\aaaa}\rrrr),
\end{equation}
where $\ssss_0^{(\aaaa)}=\zzzz(\aaaa)-\zzzz(1+\aaaa),\;\ssss_1^{(\aaaa)}=1-\zzzz(\aaaa),$
$$\ssss_k^{(\aaaa)}=\dddd{1}{k^{1+\aaaa}},\quad (k=2,\cdots,n-2),$$
$$\ssss_{n-1}^{(\aaaa)}=\dddd{1}{(n-1)^{1+\aaaa}}-n S_n [\aaaa+1]+S_n[\aaaa]-\dddd{n^{1-\aaaa}}{\aaaa(1-\aaaa)},$$
$$\ssss_{n}^{(\aaaa)}=(n-1)S_n[\aaaa+1]-S_n[\aaaa]+\dddd{n^{1-\aaaa}}{\aaaa(1-\aaaa)}.$$
\begin{equation}\label{7_10}
\dddd{1}{\GGGG(-\aaaa)h^\aaaa}\sum_{k=0}^{n}\dddddd_k^{(\aaaa)} y_{n-k}= y_n^{(\aaaa)}+O\llll(h^{3-\aaaa}\rrrr),
\end{equation}
where
$$\dddddd_0^{(\aaaa)}=-\zzzz(1+\aaaa)+\dddd{3}{2}\zzzz(\aaaa)-\dddd{1}{2}\zzzz(\aaaa-1),\;\dddddd_1^{(\aaaa)}=1-2\zzzz(\aaaa)+\zzzz(\aaaa-1),$$
$$\dddddd_2^{(\aaaa)}=\dddd{1}{2^{1+\aaaa}}+\dddd{1}{2}\zzzz(\aaaa)-\dddd{1}{2}\zzzz(\aaaa-1),\;\dddddd_{k}^{(\aaaa)}=\dddd{1}{k^{1+\aaaa}}\quad (k=3,\cdots,n-3),$$
\begin{align*}
\dddddd_{n-2}^{(\aaaa)}=\dddd{1}{(n-2)^{1+\aaaa}}-\dddd{1}{2} \Bigg(n(n-1)&S_n[\aaaa+1]-(2 n-1)S_n[\aaaa],\\
&+S_n[\aaaa-1]+\frac{(\aaaa+2 n-2) n^{1-\aaaa}}{(\aaaa-2) (\aaaa-1) \aaaa}\Bigg),
\end{align*}
\begin{align*}
\dddddd_{n-1}^{(\aaaa)}=\dddd{1}{(n-1)^{1+\aaaa}}+n(n-2) &S_n[\aaaa+1]-2(n-1)S_n[\aaaa]\\
&+S_n[\aaaa-1]+\dddd{2 (\aaaa+n-2) n^{1-\aaaa}}{\aaaa(1-\aaaa)(2-\aaaa)},
\end{align*}
\begin{align*}
\dddddd_n^{(\aaaa)}=-\dddd{1}{2} \Bigg((n-1) (n-2)&S_n[\aaaa+1]-(2 n-3)S_n[\aaaa] \\
&+S_n[\aaaa-1]+\dddd{(3 \aaaa+2 n-6) n^{1-\aaaa}}{\aaaa(1-\aaaa)(2-\aaaa)}\Bigg).
\end{align*}
 The expansion formula \eqref{4_15} of the $L1$ approximation contains the term $C_1(\aaaa)y''_n h^{2-\aaaa}$ where $C_1(\aaaa)=\zzzz(\aaaa-1)/\GGGG(2-\aaaa)$. The expansions of approximations \eqref{5_30} and \eqref{6_20} contain terms $C_9(\aaaa)y''_n h^{2-\aaaa}$ and $C_{12}(\aaaa)y''_n h^{2-\aaaa}$ with coefficients
$$C_9(\aaaa)=\frac{\zeta (\aaaa)-2\zeta (\aaaa-1)}{2\Gamma (1-\aaaa)},\; C_{12}(\aaaa)=\frac{\zeta (\aaaa)-\zeta (\aaaa-1)}{2\Gamma (-\aaaa)}.$$
		\begin{figure}[ht]
  \centering
  \caption{Graph of the coefficients $C_1(\aaaa)$(black), $C_9(\aaaa)$(red), $C_{12}(\aaaa)$(blue).} 
  \includegraphics[width=0.5\textwidth]{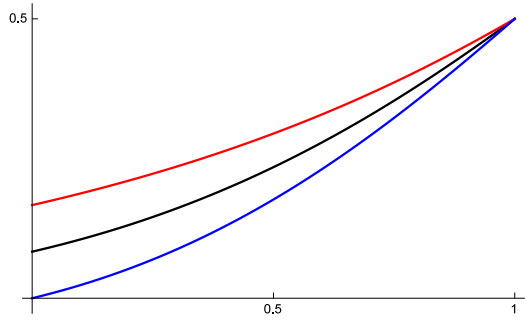}
\end{figure}

The accuracy of the  numerical solutions of fractional differential equations which use approximations \eqref{3_10}, \eqref{5_30} and \eqref{6_20} of the Caputo derivative is influenced by the values of the coefficients $C_1(\aaaa),C_9(\aaaa)$ and $C_{12}(\aaaa)$. In Figure 1 we compare the coefficients $C_1(\aaaa)$, $C_9(\aaaa)$  and $C_{12}(\aaaa)$. The three coefficients are positive and the coefficient $C_{12}(\aaaa)$ of approximation \eqref{6_20} is smaller than $C_1(\aaaa)$ and $C_9(\aaaa)$.  In Table 1 and Table 9 we compute the maximum errors of the numerical solutions of Equation I, Equation II and Equation III which use approximations  \eqref{3_10} and \eqref{6_20}. The accuracy of the numerical solution which uses approximation \eqref{6_20} for the Caputo derivative is higher  than the  accuracy of the numerical solution using the $L1$ approximation \eqref{3_10}. The improvement is  $88\%$ for Equation I and $\aaaa=0.25$, $35\%$ for Equation II and $\aaaa=0.5$ and $9\%$ for Equation III with $\aaaa=0.75$.
\section{Approximation for the Caputo Derivative of Order $\boldsymbol{2-\aaaa}$}
In this section we use approximation \eqref{5_10} for the fractional integral  to obtain approximations for the Caputo derivative of order $1-\aaaa$ and $2-\aaaa$. From \eqref{5_10}
\begin{align}\label{8_10}
h^{\bbbb}\sum_{k=1}^{n-1} \dddd{ z_{n-k}}{k^{1-\bbbb}}=K^{\bbbb}z_n+\zzzz(1-\bbbb) z_n h^{\aaaa}-\zzzz(-\bbbb)z'_n h^{1+\bbbb}
+O\llll(h^{2+\bbbb} \rrrr).
\end{align}
Approximation \eqref{8_10} has accuracy $O\llll(h^{2+\bbbb}\rrrr)$ when the function $z$ satisfies  $z(0)=z'(0)=0$. The Caputo derivative of order $\aaaa$ satisfies 
$$\GGGG(1-\aaaa) y_n^{(\aaaa)}=K^{1-\aaaa}y'_n.$$
 From \eqref{8_10} with $\bbbb=1-\aaaa$ and $z(x)=y'(x)$ we obtain
\begin{align}\label{8_20}
h^{1-\aaaa}\sum_{k=1}^{n-1} \dddd{ y'_{n-k}}{k^{\aaaa}}=\GGGG(1-\aaaa)y^{(\aaaa)}_n+\zzzz(\aaaa) y'_n h^{1-\aaaa}-\zzzz(\aaaa-1)y''_nh^{2-\aaaa}
+O\llll(h^{3-\aaaa} \rrrr).
\end{align}
By approximating $y'_{n-k}$ in \eqref{8_20}  using central difference approximation
\begin{align}\label{9_10}
y'_{n-k}=\dddd{y_{n-k+1}-y_{n-k-1}}{2h}+\dddd{h^2}{6}y'''(\xi_{n-k}), 
\end{align}
where $x_{n-k-1}<\xi_{n-k}<x_{n-k+1}$, we obtain the approximation
\begin{align}\label{9_20}
&h^{1-\aaaa}\sum_{k=1}^{n-1} \dddd{ y'_{n-k}}{k^{\aaaa}}\approx \dddd{1}{2h^\aaaa} \sum_{k=1}^{n-1} \dddd{ y_{n-k+1}-y_{n-k-1}}{k^{\aaaa}}=\dddd{1}{2h^\aaaa}\llll( \sum_{k=1}^{n-1} \dddd{ y_{n-k+1}}{k^{\aaaa}}-\sum_{k=1}^{n-1} \dddd{y_{n-k-1}}{k^{\aaaa}}\rrrr)\nonumber\\
&=\dddd{1}{2h^\aaaa}\Bigg(y_n+\dddd{y_{n-1}}{2^\aaaa}+ \sum_{k=3}^{n-1} \dddd{ y_{n-k+1}}{k^{\aaaa}}-\sum_{k=1}^{n-3} \dddd{y_{n-k-1}}{k^{\aaaa}}-\dddd{y_{1}}{(n-2)^\aaaa} -\dddd{y_{0}}{(n-1)^\aaaa}\Bigg)\\
&=\dddd{1}{2h^\aaaa}\llll(y_n+\dddd{y_{n-1}}{2^\aaaa} +\sum_{k=2}^{n-2} y_{n-k}\llll(\dddd{1}{(k+1)^\aaaa}-\dddd{1}{(k-1)^\aaaa}  \rrrr)-\dddd{y_{1}}{(n-2)^\aaaa} -\dddd{y_{0}}{(n-1)^\aaaa} \rrrr).\nonumber
\end{align}
In Lemma 1 we show that approximation \eqref{9_20} has second order accuracy.
\begin{lem} Let $M_3=\max_{0\leq t\leq x}|y'''(t)|$. Then
$$\llll| h^{1-\aaaa}\sum_{k=1}^{n-1} \dddd{ y'_{n-k}}{k^{\aaaa}}- \dddd{1}{2h^\aaaa} \sum_{k=1}^{n-1} \dddd{ y_{n-k+1}-y_{n-k-1}}{k^{\aaaa}}\rrrr|
<\llll(\dddd{M_3 x^{1-\aaaa}}{6(1-\aaaa)}\rrrr)h^{2}.$$
\end{lem}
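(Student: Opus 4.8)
The plan is to start from the central difference identity \eqref{9_10}, which writes each $y'_{n-k}$ as the symmetric difference quotient plus the Lagrange remainder $\dddd{h^2}{6}y'''(\xi_{n-k})$. Subtracting the two sums in the statement term by term, the difference-quotient parts cancel exactly and only the remainders survive. Using $h^{1-\aaaa}/(2h)=1/(2h^\aaaa)$ to match the normalisation of the second sum, the quantity to be estimated becomes
$$h^{1-\aaaa}\sum_{k=1}^{n-1}\dddd{1}{k^\aaaa}\llll(y'_{n-k}-\dddd{y_{n-k+1}-y_{n-k-1}}{2h}\rrrr)=\dddd{h^{3-\aaaa}}{6}\sum_{k=1}^{n-1}\dddd{y'''(\xi_{n-k})}{k^\aaaa}.$$

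Next I would pass to absolute values and replace each $|y'''(\xi_{n-k})|$ by its uniform bound $M_3$, reducing the problem to estimating the scalar sum $\sum_{k=1}^{n-1}k^{-\aaaa}$. Since $0<\aaaa<1$, the function $t\mapsto t^{-\aaaa}$ is positive and strictly decreasing on $(0,\infty)$, so $k^{-\aaaa}\leq\int_{k-1}^{k}t^{-\aaaa}\,dt$ for every $k$, with strict inequality at $k=1$ because $\aaaa>0$ forces $1<1/(1-\aaaa)$. Summing these telescoping integral bounds gives
$$\sum_{k=1}^{n-1}\dddd{1}{k^\aaaa}<\int_0^{n-1}t^{-\aaaa}\,dt=\dddd{(n-1)^{1-\aaaa}}{1-\aaaa}\leq\dddd{n^{1-\aaaa}}{1-\aaaa}.$$

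Combining the two estimates and using $x=nh$, so that $h^{3-\aaaa}n^{1-\aaaa}=(nh)^{1-\aaaa}h^2=x^{1-\aaaa}h^2$, yields the claimed bound $\dddd{M_3 x^{1-\aaaa}}{6(1-\aaaa)}h^2$. The argument is essentially mechanical, and I do not foresee any genuine obstacle; the only point requiring a little care is justifying the \emph{strict} inequality in the conclusion, which I would trace to the strict integral comparison at $k=1$ (this tacitly presumes $M_3>0$, i.e. that $y$ is not a polynomial of degree at most two, in which case the central difference is exact and both sides vanish).
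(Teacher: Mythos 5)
Your proposal is correct and follows essentially the same route as the paper's proof: apply the Lagrange remainder from \eqref{9_10} so the difference quotients cancel, bound $|y'''(\xi_{n-k})|$ by $M_3$, compare $\sum_{k=1}^{n-1}k^{-\aaaa}$ with an integral, and convert $h^{3-\aaaa}n^{1-\aaaa}$ to $x^{1-\aaaa}h^2$ via $x=nh$. Your only deviations are cosmetic refinements — integrating over $[0,n-1]$ rather than $[0,n]$ and flagging that strictness degenerates when $M_3=0$ (an edge case the paper's chain of strict inequalities also silently ignores).
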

\begin{proof} From \eqref{9_10}
\begin{align*}
\Bigg| h^{1-\aaaa}\sum_{k=1}^{n-1}\dddd{ y'_{n-k}}{k^{\aaaa}}-\dddd{1}{2h^\aaaa} \sum_{k=1}^{n-1} &\dddd{ y_{n-k+1}-y_{n-k-1}}{k^{\aaaa}}\Bigg|=\dddd{h^{3-\aaaa}}{6}\llll|\sum_{k=1}^{n-1} \dddd{y'''(\xi_{n-k})}{k^\aaaa}  \rrrr|\\
&<\dddd{h^{3-\aaaa}M_3}{6}\sum_{k=1}^{n-1} \dddd{1}{k^\aaaa}<\dddd{h^{3-\aaaa}M_3}{6}\int_0^n \dddd{1}{t^\aaaa}d t\\
&<\dddd{h^{3-\aaaa}M_3 n^{1-\aaaa}}{6(1-\aaaa)}=\dddd{M_3 x^{1-\aaaa}h^{2}}{6(1-\aaaa)}.
\end{align*}
\end{proof}
Let
\begin{align}\label{9_30}
\mathcal{A}_n^\oooo[y]=\dddd{1}{2\GGGG(1-\aaaa)h^\aaaa}\sum_{k=0}^{n}\oooo_k^{(\aaaa)} y_{n-k},
\end{align}
where
$\oooo_0^{(\aaaa)}=1,\quad \oooo_1^{(\aaaa)}=\dddd{1}{2^\aaaa},\quad \oooo_{n-1}^{(\aaaa)}=-\dddd{1}{(n-2)^\aaaa},\quad \oooo_{n}^{(\aaaa)}=-\dddd{1}{(n-1)^\aaaa}$,
$$\oooo_k^{(\aaaa)}=\dddd{1}{(k+1)^\aaaa}-\dddd{1}{(k-1)^\aaaa},\quad (k=2,\cdots,n-2).$$
From \eqref{8_20} and \eqref{9_20} we obtain the expansion formula of approximation $\mathcal{A}_n^\oooo[y]$.
	\begin{lem} Let $y(0)=y'(0)=0$. Then 
\begin{align}\label{10_10}
\mathcal{A}_n^\oooo[y]=y_n^{(\aaaa)}+\dddd{\zzzz(\aaaa)}{\GGGG(1-\aaaa)}y'_n h^{1-\aaaa}-\dddd{\zzzz(\aaaa-1)}{\GGGG(1-\aaaa)}y''_n h^{2-\aaaa}+O\llll( h^{2} \rrrr).
\end{align}
	\end{lem}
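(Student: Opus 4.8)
The plan is to obtain \eqref{10_10} by assembling three facts already in hand: the integral expansion \eqref{8_20}, the central-difference bound of Lemma 1, and the index rearrangement carried out in \eqref{9_20}. The decisive observation is that the reorganized sum on the last line of \eqref{9_20} is, by the definition \eqref{9_30} of the weights $\oooo_k^{(\aaaa)}$, exactly $\GGGG(1-\aaaa)\,\mathcal{A}_n^\oooo[y]$; once this is recognized the lemma reduces to a substitution and a comparison of orders.

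First I would record the algebraic identity contained in \eqref{9_20},
$$\dddd{1}{2h^\aaaa}\sum_{k=1}^{n-1}\dddd{y_{n-k+1}-y_{n-k-1}}{k^\aaaa}=\GGGG(1-\aaaa)\,\mathcal{A}_n^\oooo[y],$$
which is verified by shifting the summation index in each of the two sums and reading off the coefficient of each $y_{n-k}$: the coefficients reproduce $\oooo_0^{(\aaaa)}=1$, $\oooo_1^{(\aaaa)}=2^{-\aaaa}$, the interior weights $\oooo_k^{(\aaaa)}=(k+1)^{-\aaaa}-(k-1)^{-\aaaa}$, and the two boundary weights $\oooo_{n-1}^{(\aaaa)}=-(n-2)^{-\aaaa}$ and $\oooo_{n}^{(\aaaa)}=-(n-1)^{-\aaaa}$. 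Next I would apply Lemma 1, whose bound $(M_3x^{1-\aaaa}/(6(1-\aaaa)))h^2=O(h^2)$ on the difference between $h^{1-\aaaa}\sum_{k=1}^{n-1}y'_{n-k}/k^\aaaa$ and this central-difference sum yields, together with the identity above,
$$h^{1-\aaaa}\sum_{k=1}^{n-1}\dddd{y'_{n-k}}{k^\aaaa}=\GGGG(1-\aaaa)\,\mathcal{A}_n^\oooo[y]+O(h^2).$$
Substituting the expansion \eqref{8_20} for the left-hand side then gives
$$\GGGG(1-\aaaa)\,\mathcal{A}_n^\oooo[y]=\GGGG(1-\aaaa)y_n^{(\aaaa)}+\zzzz(\aaaa)y'_nh^{1-\aaaa}-\zzzz(\aaaa-1)y''_nh^{2-\aaaa}+O(h^{3-\aaaa})+O(h^2),$$
and dividing by $\GGGG(1-\aaaa)$ produces \eqref{10_10}.

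The only point needing care is the order count: since $0<\aaaa<1$ we have $3-\aaaa>2$, so the remainder $O(h^{3-\aaaa})$ coming from \eqref{8_20} is absorbed into the $O(h^2)$ error supplied by Lemma 1, while the displayed $h^{2-\aaaa}$ term survives because $2-\aaaa<2$. I expect the main, though modest, obstacle to be the bookkeeping in the rearrangement above, in particular checking that the terms falling outside the ranges of the two shifted sums are precisely the boundary weights $\oooo_{n-1}^{(\aaaa)}$ and $\oooo_{n}^{(\aaaa)}$. The hypothesis $y(0)=y'(0)=0$ enters only at the substitution step, where it is what legitimizes \eqref{8_20} (obtained from \eqref{8_10} with $\bbbb=1-\aaaa$ and $z=y'$) by forcing the leading left-endpoint correction terms of the underlying expansion \eqref{5_10}, which are of orders $h$ and $h^2$ and hence larger than the retained remainder, to vanish.
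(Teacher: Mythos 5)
Your argument is correct and is essentially the paper's own route: the paper obtains this lemma in one line, "from \eqref{8_20} and \eqref{9_20}," with Lemma 1 supplying the $O(h^2)$ central-difference error, which is exactly the assembly you carry out. One correction to your closing remark, though it does not affect the result: with $z=y'$ the hypothesis gives only $z(0)=y'(0)=0$ and not $z'(0)=y''(0)=0$, so only the order-$h$ left-endpoint correction of \eqref{5_10} vanishes, while the order-$h^2$ correction (proportional to $y''(0)$) survives; consequently \eqref{8_20} is justified under the lemma's hypothesis only with remainder $O(h^{2})$ rather than the $O(h^{3-\alpha})$ you quote (the paper itself notes that \eqref{8_10} has accuracy $O(h^{2+\beta})$ only when $z(0)=z'(0)=0$), but since you immediately absorb everything into $O(h^{2})$, and the surviving $y''(0)h^2$ term is likewise $O(h^2)$ at fixed $x$, the stated conclusion stands.
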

	The numbers $\oooo_k^{(\aaaa)}$ satisfy $\sum_{k=0}^{n}\oooo_k^{(\aaaa)}=0$ and  $\mathcal{A}_n^\oooo[1]=D^\aaaa 1=0$. From Lemma 2, approximation $\mathcal{A}_n^\oooo[y]$ for the Caputo derivative has order $1-\aaaa$.
\begin{align}\label{10_20}
\dddd{1}{2\GGGG(1-\aaaa)h^\aaaa}\sum_{k=0}^{n}\oooo_k^{(\aaaa)} y_{n-k}=y_n^{(\aaaa)}+O\llll( h^{1-\aaaa} \rrrr).
\end{align}
 When $n=2$, approximation $\mathcal{A}_2^\oooo[y]$ has weights $\oooo_0^{(\aaaa)}=1,\oooo_1^{(\aaaa)}=0,\oooo_2^{(\aaaa)}=-1$. Denote by Equation I, Equation II and Equation III the fractional relaxation equations 
\begin{align}\tag{{\bf Equation I}}
y^{(\aaaa)}(x)+y(x)=1&+x+x^2+x^3+x^4+\dddd{x^{1-\aaaa}}{\GGGG(2-\aaaa)}\nonumber\\
&+\dddd{2 x^{2-\aaaa}}{\GGGG(3-\aaaa)}
+\dddd{6 x^{3-\aaaa}}{\GGGG(4-\aaaa)}+\dddd{24 x^{4-\aaaa}}{\GGGG(5-\aaaa)},\; y(0)=1.\nonumber
\end{align}
Equation I has the solution $y(x)=1+x+x^2+x^3+x^4$.		
\begin{align} \tag{{\bf Equation II}}
y^{(\aaaa)}(x)+y(x)=e^x+x^{1-\aaaa}E_{1,2-\aaaa}(x),\quad y(0)=1.
\end{align}
Equation II has the solution $y(x)=e^x$.
\begin{align} 
&y^{(\aaaa)}(x)+y(x)=\cos (2\pi x)+i \pi x^{1-\aaaa}
\llll(E_{1,2-\aaaa}(2\pi i x)-E_{1,2-\aaaa}(-2\pi i x)\rrrr),\nonumber\\
& y(0)=1. \tag{{\bf Equation III}}\nonumber
\end{align}
Equation III has the solution $y(x)=\cos (2\pi x)$. The numerical results for the maximum error and order of numerical solution $NS[20]$ of Equation I, Equation II and Equation III are given in Table 2.
\setlength{\tabcolsep}{0.5em}
{ \renewcommand{\arraystretch}{1.1}
\begin{table}[ht]
	\caption{Error and order of numerical solution $NS[20]$ of Equation I with $\aaaa=0.25$, Equation II with $\aaaa=0.5$ and Equation III with $\aaaa=0.75$.}
	\small
	\centering
  \begin{tabular}{ l | c  c | c  c | c  c }
		\hline
		\hline
		\multirow{2}*{ $\quad \boldsymbol{h}$}  & \multicolumn{2}{c|}{{\bf Equation I}} & \multicolumn{2}{c|}{{\bf Equation II}}  & \multicolumn{2}{c}{{\bf Equation III}} \\
		\cline{2-7}  
   & $Error$ & $Order$  & $Error$ & $Order$  & $Error$ & $Order$ \\ 
		\hline \hline
$0.003125$    & $0.040435$  & $0.7606$  & $0.061232$   & $0.5206$    & $0.521231$   & $0.3076$       \\ 
$0.0015625$   & $0.023913$  & $0.7578$  & $0.042851$   & $0.5150$    & $0.423886$   & $0.2983$       \\ 
$0.00078125$  & $0.014162$  & $0.7558$  & $0.030075$   & $0.5108$    & $0.346614$   & $0.2904$        \\ 
$0.000390625$ & $0.008395$  & $0.7544$  & $0.021152$   & $0.5077$    & $0.284739$   & $0.2837$        \\
\hline
  \end{tabular}
	\end{table}
	}
	
	By substituting $y'_n$ in \eqref{10_10} using the approximation
$$y'_{n}=\dddd{y_{n}-y_{n-1}}{h}+\dddd{h}{2}y''_n+O\llll( h^2 \rrrr),$$
we obtain the second-order expansion formula 
\begin{align}\label{11_10}
\mathcal{A}_n^{\bar{\ssss}}[y]=\dddd{1}{2\GGGG(1-\aaaa)h^\aaaa}\sum_{k=0}^{n}\bar{\ssss}_k^{(\aaaa)} y_{n-k}= y_n^{(\aaaa)}+\Bigg(&\dddd{\zzzz(\aaaa)-2\zzzz(\aaaa-1)}{2\GGGG(1-\aaaa)}\Bigg)y''_n h^{2-\aaaa}\nonumber\\
&+O\llll( h^{2} \rrrr),
\end{align}
where $\bar{\ssss}_0^{(\aaaa)}=1-2\zzzz(\aaaa),\quad \bar{\ssss}_1^{(\aaaa)}=\dddd{1}{2^\aaaa}+2\zzzz(\aaaa)$, $\bar{\ssss}_k^{(\aaaa)}=\oooo_k^{(\aaaa)},\; (k=2,\cdots,n)$.

The sum of the weights $\bar{\ssss}_{k}^{(\aaaa)}$ of approximation $\mathcal{A}_n^{\bar{\ssss}}[y]$  is zero and  
$$\mathcal{A}_n^{\bar{\ssss}}[1]=D^\aaaa 1 =0.$$
  Approximation $\mathcal{A}_n^{\bar{\ssss}}[y]$ has accuracy  $O\llll( h^{2-\aaaa} \rrrr)$ when  $y^\prime (0)=0$. Denote
$$W_n[\aaaa]=\sum _{k=1}^{n-1} \frac{1}{k^\aaaa}-\frac{n^{1-\aaaa}}{1-\aaaa}-\zeta (\aaaa).$$
\begin{clm} Let $y(x)=x$. Then
$$\mathcal{A}_n^{\bar{\ssss}}[y(x)]-y^{(\aaaa)}(x)=\dddd{W_n[\aaaa]h^{1-\aaaa}}{\GGGG(1-\aaaa)}.$$
\end{clm}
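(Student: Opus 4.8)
The plan is to evaluate both terms on the left-hand side explicitly for the linear function and compare. Since $x_{n-k}=(n-k)h$, we have $y_{n-k}=(n-k)h$, so the approximation reduces to a discrete first moment of the weights:
$$\sum_{k=0}^n \bar{\ssss}_k^{(\aaaa)} y_{n-k}=h\sum_{k=0}^n (n-k)\bar{\ssss}_k^{(\aaaa)}=-h\sum_{k=0}^n k\,\bar{\ssss}_k^{(\aaaa)},$$
where the last equality uses $\sum_{k=0}^n \bar{\ssss}_k^{(\aaaa)}=0$, which was noted right after \eqref{11_10}. Thus the whole problem is reduced to computing the first moment $\sum_{k=1}^n k\,\bar{\ssss}_k^{(\aaaa)}$.

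First I would pass from the $\bar{\ssss}$-weights to the $\oooo$-weights. Since $\bar{\ssss}_0^{(\aaaa)}=\oooo_0^{(\aaaa)}-2\zzzz(\aaaa)$, $\bar{\ssss}_1^{(\aaaa)}=\oooo_1^{(\aaaa)}+2\zzzz(\aaaa)$ and $\bar{\ssss}_k^{(\aaaa)}=\oooo_k^{(\aaaa)}$ for $k\ge 2$, and since the $k=0$ correction is annihilated by the factor $k$, I get $\sum_{k=1}^n k\,\bar{\ssss}_k^{(\aaaa)}=\sum_{k=1}^n k\,\oooo_k^{(\aaaa)}+2\zzzz(\aaaa)$. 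The central step, which I expect to be the main obstacle, is the moment identity
$$\sum_{k=1}^n k\,\oooo_k^{(\aaaa)}=-2\sum_{k=1}^{n-1}\dddd{1}{k^\aaaa}.$$
To prove it I would insert $\oooo_k^{(\aaaa)}=(k+1)^{-\aaaa}-(k-1)^{-\aaaa}$ on the range $2\le k\le n-2$, split the resulting sum into $\sum_{k=2}^{n-2}k(k+1)^{-\aaaa}$ and $\sum_{k=2}^{n-2}k(k-1)^{-\aaaa}$, and reindex by $j=k+1$ and $j=k-1$ respectively. Writing $k=j\mp1$ in the numerators separates each reindexed sum into a power sum of exponent $1-\aaaa$ and one of exponent $-\aaaa$; the $(1-\aaaa)$-power sums telescope against each other as a difference of overlapping ranges, leaving only the endpoint terms $(n-2)^{1-\aaaa}+(n-1)^{1-\aaaa}-1-2^{1-\aaaa}$, and these combine with the two boundary weights $(n-1)\oooo_{n-1}^{(\aaaa)}=-(n-1)(n-2)^{-\aaaa}$ and $n\,\oooo_n^{(\aaaa)}=-n(n-1)^{-\aaaa}$ through the elementary identity $m^{1-\aaaa}-(m+1)m^{-\aaaa}=-m^{-\aaaa}$. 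Collecting the remaining $(-\aaaa)$-power terms together with the $k=1$ contribution $\oooo_1^{(\aaaa)}=2^{-\aaaa}$ then yields $-2\sum_{k=1}^{n-1}k^{-\aaaa}$; this bookkeeping is the only genuinely delicate part and is best verified by tracking the coefficient of each $m^{-\aaaa}$, which turns out to be $-2$ for every $1\le m\le n-1$.

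Granting the moment identity, I would combine it with the definition $S_n[\aaaa]=\sum_{k=1}^{n-1}k^{-\aaaa}-\zzzz(\aaaa)$ to obtain $\sum_{k=1}^n k\,\bar{\ssss}_k^{(\aaaa)}=-2S_n[\aaaa]$, hence $\sum_{k=0}^n \bar{\ssss}_k^{(\aaaa)} y_{n-k}=2h\,S_n[\aaaa]$ and
$$\mathcal{A}_n^{\bar{\ssss}}[y]=\dddd{2h\,S_n[\aaaa]}{2\GGGG(1-\aaaa)h^\aaaa}=\dddd{S_n[\aaaa]\,h^{1-\aaaa}}{\GGGG(1-\aaaa)}.$$
Finally I would compute the exact Caputo derivative $D^\aaaa x=x^{1-\aaaa}/\GGGG(2-\aaaa)=n^{1-\aaaa}h^{1-\aaaa}/\big((1-\aaaa)\GGGG(1-\aaaa)\big)$ using $x=nh$ and $\GGGG(2-\aaaa)=(1-\aaaa)\GGGG(1-\aaaa)$, and subtract. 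The common factor $h^{1-\aaaa}/\GGGG(1-\aaaa)$ pulls out, leaving $S_n[\aaaa]-n^{1-\aaaa}/(1-\aaaa)$, which is precisely $W_n[\aaaa]$ by its definition, giving the claimed formula.
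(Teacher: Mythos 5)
Your proof is correct and takes essentially the same route as the paper's: both reduce $\mathcal{A}_n^{\bar{\ssss}}[x]$ to the first moment $-h\sum_{k=1}^{n}k\,\bar{\ssss}_k^{(\aaaa)}$ via the vanishing sum of the weights, evaluate that moment by the reindexing/telescoping you describe (which the paper compresses into the phrase ``by changing the index of summation''), and then subtract $D^{\aaaa}x=x^{1-\aaaa}/\GGGG(2-\aaaa)$ with $x=nh$ to recognize $W_n[\aaaa]=S_n[\aaaa]-n^{1-\aaaa}/(1-\aaaa)$. Your intermediate moment identity $\sum_{k=1}^{n}k\,\oooo_k^{(\aaaa)}=-2\sum_{k=1}^{n-1}k^{-\aaaa}$ is correct and simply makes explicit the bookkeeping the paper leaves implicit.
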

\begin{proof}
$$2\GGGG(1-\aaaa)h^\aaaa \mathcal{A}_n^{\bar{\ssss}}[x]=\sum_{k=0}^{n}\bar{\ssss}_k^{(\aaaa)}(x-k h) =-h\sum_{k=1}^{n}k\bar{\ssss}_k^{(\aaaa)},$$
\begin{align*}
\mathcal{A}_n^{\bar{\ssss}}[x]=-\dddd{h^{1-\aaaa}}{2\GGGG(1-\aaaa)}\Bigg( \dddd{1}{2^\aaaa}+2\zzzz(\aaaa)+\sum_{k=2}^{n-2}\Bigg( &\dddd{k}{(k+1)^\aaaa}-  \dddd{k}{(k-1)^\aaaa} \Bigg) \\
&-\dddd{n-1}{(n-2)^\aaaa} -\dddd{n}{(n-1)^\aaaa}\Bigg).
\end{align*}
By changing the index of summation we obtain
$$\mathcal{A}_n^{\bar{\ssss}}[x]=\dddd{h^{1-\aaaa}}{\GGGG(1-\aaaa)} \left(\sum _{k=1}^{n-1} \dddd{1}{k^\aaaa}-\zeta (\aaaa)\right).$$
Then
\begin{align*}
\mathcal{A}_n^{\bar{\ssss}}[y(x)]-y^{(\aaaa)}(x)&=\dddd{h^{1-\aaaa}}{\GGGG(1-\aaaa)} \left(\sum _{k=1}^{n-1} \dddd{1}{k^\aaaa}-\zeta (\aaaa)\right)-\dddd{x^{1-\aaaa}}{\GGGG(2-\aaaa)}\\
&=\dddd{h^{1-\aaaa}}{\GGGG(1-\aaaa)} \left(\sum _{k=1}^{n-1} \frac{1}{k^\aaaa}-\zeta (\aaaa)-\frac{n^{1-\aaaa}}{1-\aaaa}\right)=\dddd{h^{1-\aaaa}W_n[\aaaa]}{\GGGG(1-\aaaa)}.
\end{align*}
\end{proof}
Now we derive an approximation for the Caputo derivative of order $2-\aaaa$ by modifying the weights of approximation $\mathcal{A}_n^{\bar{\ssss}}[y]$ with index $n-1$ and $n$.
$$y(x)=y(x)-y(0)-y'(0)x+y(0)+y'(0)x=z(x)+y(0)+y'(0)x.$$
The function $z(x)=y(x)-y(0)-y'(0)x$ satisfies $z(0)=z'(0)=0$. Then 
$$\mathcal{A}^{\bar{\ssss}}_n[z]=z^{(\aaaa)}(x)+O\llll( h^{2-\aaaa} \rrrr)=y^{(\aaaa)}(x)-y'(0)\dddd{x^{1-\aaaa}}{\GGGG(2-\aaaa)}+O\llll( h^{2-\aaaa} \rrrr).$$
We use Claim 3 to determine the first term of the expansion of approximation $\mathcal{A}_n^{\bar{\ssss}}[y]$ at the initial point $x=0$.
$$\mathcal{A}^{\bar{\ssss}}_n[y]=\mathcal{A}^{\bar{\ssss}}_n[z]+\mathcal{A}^{\bar{\ssss}}_n[y(0)+y'(0)x],$$
$$\mathcal{A}^{\bar{\ssss}}_n[y]=y^{(\aaaa)}(x)-y'(0)\dddd{x^{1-\aaaa}}{\GGGG(2-\aaaa)}+y'(0)\mathcal{A}_n[x]+O\llll( h^{2-\aaaa} \rrrr),$$
\begin{align*}
\dddd{1}{2\GGGG(1-\aaaa)h^\aaaa}\sum_{k=0}^{n}\bar{\ssss}_k y_{n-k}=y_n^{(\aaaa)}+\llll(\dddd{y'_0 W_n[\aaaa]}{\GGGG(1-\aaaa)}\rrrr)h^{1-\aaaa}+O\llll( h^{2-\aaaa} \rrrr).
\end{align*}
By approximating $y'_0$ using  forward difference
$y'_0=(y_1-y_{0})/h+O(h)$ we obtain  approximation \eqref{5_30} for the Caputo derivative
\begin{align*}
\mathcal{A}_n^{\ssss}[y]=\dddd{1}{2\GGGG(1-\aaaa)h^\aaaa}\sum_{k=0}^{n}\ssss_k^{(\aaaa)} y_{n-k}= y_n^{(\aaaa)}+O\llll( h^{2-\aaaa} \rrrr),
\end{align*}
where $\ssss_k^{(\aaaa)}=\bar{\ssss}_k^{(\aaaa)}$ for $k=0,\cdots,n-2$ and
$$\ssss_{n-1}^{(\aaaa)}=-\dddd{1}{(n-2)^\aaaa}-2 W_n[\aaaa],\; \ssss_{n}^{(\aaaa)}=-\dddd{1}{(n-1)^\aaaa}+2 W_n[\aaaa].$$
When $n=2$, approximation $\mathcal{A}_n^{\ssss}[y]$ has weights
$$\ssss_{0}^{(\aaaa)}=1-2 \zeta (\aaaa),\; \ssss_{1}^{(\aaaa)}=4 \zeta (\aaaa)+\frac{2^{2-\aaaa}}{1-\aaaa}-2,\; \ssss_{2}^{(\aaaa)}=-2 \zeta (\aaaa)-\frac{2^{2-\aaaa}}{1-\aaaa}+1.$$
\setlength{\tabcolsep}{0.5em}
{ \renewcommand{\arraystretch}{1.1}
\begin{table}[ht]
	\caption{Error and order of numerical solution $NS[9]$ of Equation I with $\aaaa=0.25$, Equation II with $\aaaa=0.5$ and Equation III with $\aaaa=0.75$.}
	\small
	\centering
  \begin{tabular}{ l | c  c | c  c | c  c }
		\hline
		\hline
		\multirow{2}*{ $\quad \boldsymbol{h}$}  & \multicolumn{2}{c|}{{\bf Equation I}} & \multicolumn{2}{c|}{{\bf Equation II}}  & \multicolumn{2}{c}{{\bf Equation III}} \\
		\cline{2-7}  
   & $Error$ & $Order$  & $Error$ & $Order$  & $Error$ & $Order$ \\ 
		\hline \hline
$0.003125$    & $0.0000673$          & $1.6772$  & $0.0000637$          & $1.4780$    & $0.0026324$   & $1.2410$       \\ 
$0.0015625$   & $0.0000208$          & $1.6914$  & $0.0000227$          & $1.4846$    & $0.0011108$   & $1.2448$       \\ 
$0.00078125$  & $6.4\times 10^{-6}$  & $1.7025$  & $8.1\times 10^{-6}$  & $1.4892$    & $0.0004680$   & $1.2470$        \\ 
$0.000390625$ & $2.0\times 10^{-6}$  & $1.7112$  & $2.9\times 10^{-6}$  & $1.4924$    & $0.0001970$   & $1.2483$        \\
\hline
  \end{tabular}
	\end{table}
	}
	
The accuracy of the $L1$ approximation \eqref{3_10} and approximation \eqref{5_30} is $O\llll(h^{2-\aaaa}\rrrr)$. Now we show that the weights of approximation \eqref{5_30} satisfy \eqref{3_20}.	The Riemann zeta function is negative and decreasing on the interval $[0,1]$. The weight $\ssss_0^{(\alpha)}=1-\zzzz(\aaaa)$ of approximation \eqref{5_30} is positive and  $\ssss_1^{(\alpha)}=\dddd{1}{2^\aaaa}+\zzzz(\aaaa)$ is decreasing as a function of $\aaaa$. Then $\ssss_1^{(\alpha)}<\ssss_1^{(0)}=0$.
	From the binomial formula, 
	$$\ssss_k^{(\alpha)}=\llll(\dddd{1}{k^\aaaa}-\dddd{\aaaa}{k^{1+\aaaa}}\rrrr)-\llll(\dddd{1}{k^\aaaa}+\dddd{\aaaa}{k^{1+\aaaa}}\rrrr)+O\llll(\dddd{1}{k^{3+\aaaa}}\rrrr)=\dddd{2\aaaa}{k^{1+\aaaa}}+O\llll(\dddd{1}{k^{3+\aaaa}}\rrrr)$$
The number $\ssss_k^{(\alpha)}\approx \dddd{2\aaaa}{k^{1+\aaaa}}$ when  $k$ is a sufficiently large positive integer. In Claim 4 and Claim 5 we obtain estimates for the weights $\ssss_{n-1}^{(\alpha)}$ and $\ssss_n^{(\alpha)}$
The Bernoulli inequality holds when $r<0$ and $x>-1$.
$$(1+x)^r>1+r x.$$ 
					\begin{clm}
$$\ssss_{n-1}^{(\aaaa)}<-\dddd{11\aaaa}{6n^{1+\aaaa}}.$$
			\end{clm}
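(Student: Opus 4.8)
The plan is to reduce the claim to the representation of the weight in terms of $W_n[\alpha]$ and then combine an upper bound for $-2W_n[\alpha]$ with the Bernoulli inequality stated just above. By the construction of \eqref{5_30},
$$\sigma_{n-1}^{(\alpha)}=-\frac{1}{(n-2)^\alpha}-2W_n[\alpha],$$
so it suffices to bound $-2W_n[\alpha]$ from above and $(n-2)^{-\alpha}$ from below. The second bound is immediate from the Bernoulli inequality with $r=-\alpha<0$ and $x=-2/n>-1$:
$$\frac{1}{(n-2)^\alpha}=\frac{1}{n^\alpha}\Bigl(1-\frac{2}{n}\Bigr)^{-\alpha}>\frac{1}{n^\alpha}\Bigl(1+\frac{2\alpha}{n}\Bigr)=\frac{1}{n^\alpha}+\frac{2\alpha}{n^{1+\alpha}}.$$

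The real work is the sharp upper estimate $-2W_n[\alpha]\le n^{-\alpha}+\frac{\alpha}{6}n^{-1-\alpha}$. First I would turn $-W_n[\alpha]$ into a convergent series. Using $\zeta(\alpha)=\lim_{N\to\infty}\bigl(\sum_{k=1}^{N}k^{-\alpha}-N^{1-\alpha}/(1-\alpha)\bigr)$ together with $\int_n^N x^{-\alpha}\,dx=\sum_{k=n}^{N-1}\int_k^{k+1}x^{-\alpha}\,dx$ and letting $N\to\infty$, one obtains
$$-W_n[\alpha]=\sum_{k=n}^{\infty}a_k,\qquad a_k=\frac{1}{k^\alpha}-\frac{(k+1)^{1-\alpha}-k^{1-\alpha}}{1-\alpha},$$
with every $a_k>0$ because $x^{-\alpha}$ is decreasing. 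A binomial expansion of $(k+1)^{1-\alpha}$ gives $a_k=\frac{\alpha}{2}k^{-1-\alpha}-\frac{\alpha(1+\alpha)}{6}k^{-2-\alpha}+\cdots$. To sum the tail without losing the second-order term I would introduce the telescoping majorant $\phi(k)=\frac12 k^{-\alpha}+\frac{\alpha}{12}k^{-1-\alpha}$, whose difference $\phi(k)-\phi(k+1)$ agrees with $a_k$ through order $k^{-2-\alpha}$, and prove the termwise inequality $a_k\le\phi(k)-\phi(k+1)$. Since $\phi(k)\to0$, this telescopes to $\sum_{k\ge n}a_k\le\phi(n)=\frac12 n^{-\alpha}+\frac{\alpha}{12}n^{-1-\alpha}$, which is exactly the required bound after multiplying by $2$.

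Combining the two estimates then finishes the proof:
$$\sigma_{n-1}^{(\alpha)}=-2W_n[\alpha]-\frac{1}{(n-2)^\alpha}<\Bigl(\frac{1}{n^\alpha}+\frac{\alpha}{6\,n^{1+\alpha}}\Bigr)-\Bigl(\frac{1}{n^\alpha}+\frac{2\alpha}{n^{1+\alpha}}\Bigr)=-\frac{11\alpha}{6\,n^{1+\alpha}},$$
using $\frac{\alpha}{6}-2\alpha=-\frac{11\alpha}{6}$, with the strict sign supplied by the Bernoulli step.

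I expect the termwise inequality $a_k\le\phi(k)-\phi(k+1)$ to be the main obstacle. The two leading orders of $-2W_n[\alpha]$ and of $(n-2)^{-\alpha}$ cancel exactly, so the constant $\tfrac{11}{6}$ sits precisely at the first non-cancelling order $n^{-1-\alpha}$; consequently any upper bound for $-W_n[\alpha]$ must be accurate to order $n^{-2-\alpha}$, and a crude estimate such as $a_k<\frac{\alpha}{2}k^{-1-\alpha}$ summed against $\int x^{-1-\alpha}\,dx$ overshoots. The delicate point is thus that $a_k$ and $\phi(k)-\phi(k+1)$ coincide to two orders, so the inequality hinges on the sign of their $O(k^{-3-\alpha})$ discrepancy; I would control this by applying the Bernoulli inequality to each factor $(1+1/k)^{r}$ appearing in $a_k$ and in $\phi(k)-\phi(k+1)$, or, failing that, bound $\sum_{k\ge n}a_k$ directly by integral comparison while retaining the $n^{-2-\alpha}$ contribution.
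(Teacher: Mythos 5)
Your proposal is correct and follows the same skeleton as the paper's proof --- the representation $\sigma_{n-1}^{(\alpha)}=-1/(n-2)^{\alpha}-2W_n[\alpha]$, the Bernoulli inequality giving $1/(n-2)^{\alpha}>n^{-\alpha}+2\alpha n^{-1-\alpha}$, and the two-term upper bound $-2W_n[\alpha]<n^{-\alpha}+\frac{\alpha}{6}n^{-1-\alpha}$, whose combination yields $-\frac{11\alpha}{6}n^{-1-\alpha}$ exactly as in the paper --- but you justify the key estimate by a genuinely different mechanism. The paper reads the bound off the Euler--Maclaurin sum-of-powers expansion \eqref{14_05}, truncating after the $B_2$ term to assert \eqref{14_10} (the sign of the truncation error is not proved there). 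You instead prove the lower half of \eqref{14_10} from scratch: the series representation $-W_n[\alpha]=\sum_{k\ge n}a_k$ with $a_k=k^{-\alpha}-\int_k^{k+1}x^{-\alpha}\,dx$ is valid (it follows from $\zeta(\alpha)=\lim_{N\to\infty}\bigl(\sum_{k=1}^{N}k^{-\alpha}-N^{1-\alpha}/(1-\alpha)\bigr)$ for $0<\alpha<1$, the stray term $N^{-\alpha}$ vanishing in the limit), and telescoping against $\phi(k)=\frac12 k^{-\alpha}+\frac{\alpha}{12}k^{-1-\alpha}$ gives precisely $-W_n[\alpha]\le\frac{1}{2n^{\alpha}}+\frac{\alpha}{12\,n^{1+\alpha}}$. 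The one step you leave open, $a_k\le\phi(k)-\phi(k+1)$, does hold and closes cleanly: with $f(x)=x^{-\alpha}$ it is exactly the statement that the corrected trapezoidal rule $\frac12\bigl(f(k)+f(k+1)\bigr)-\frac{1}{12}\bigl(f'(k+1)-f'(k)\bigr)$ underestimates $\int_k^{k+1}f$, and the one-interval remainder formula $\int_k^{k+1}f=\frac12\bigl(f(k)+f(k+1)\bigr)-\frac{1}{12}\bigl(f'(k+1)-f'(k)\bigr)+\frac{1}{720}f^{(4)}(\xi)$ settles the sign since $f^{(4)}(x)=\alpha(1+\alpha)(2+\alpha)(3+\alpha)x^{-4-\alpha}>0$; note $\frac{1}{12}\bigl(f'(k+1)-f'(k)\bigr)=\frac{\alpha}{12}\bigl(k^{-1-\alpha}-(k+1)^{-1-\alpha}\bigr)$, so this is literally your majorant. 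What your route buys is a self-contained proof of the estimate the paper only quotes from the asymptotic series, at the cost of one extra quadrature-error argument; your diagnosis is also accurate that nothing cruder suffices, since the $n^{-\alpha}$ terms cancel exactly and the constant $\frac{11}{6}$ lives at the first surviving order.
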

			\begin{proof}  From the Bernoulli inequality 
			$$\dddd{1}{(n-2)^\aaaa}=n^{-\aaaa}\llll(1-\dddd{2}{n}\rrrr)^{-\aaaa}>n^{-\aaaa}\llll(1+\dddd{2\aaaa}{n}\rrrr)=\dddd{1}{n^\aaaa}+\dddd{2\aaaa}{n^{1+\aaaa}}.$$
			From the formula for the sum of powers
\begin{align}\label{14_05}
\sum _{k=1}^{n-1} \frac{1}{k^\aaaa}=\zeta (\aaaa)+\frac{n^{1-\aaaa}}{1-\aaaa}\sum_{m=0}^\infty \binom{1-\aaaa}{m}\dddd{B_m}{n^m},
\end{align}
we obtain the following estimates for $S_n(\aaaa)$
	\begin{align}\label{14_10}
	\dddd{n^{1-\aaaa}}{1-\aaaa}-\dddd{1}{2 n^\aaaa}-\dddd{\aaaa}{12 n^{1+\aaaa}}<\sum _{k=1}^{n-1} \frac{1}{k^\aaaa}-\zzzz(\aaaa)<\dddd{n^{1-\aaaa}}{1-\aaaa}-\dddd{1}{2 n^\aaaa}.
	\end{align}
	Then
	$$\ssss_{n-1}^{(\aaaa)}=-\dddd{1}{(n-2)^\aaaa}-2 \left(S_n[\aaaa]-\frac{n^{1-\aaaa}}{1-\aaaa}\right)<-\dddd{1}{(n-2)^\aaaa}+\dddd{1}{n^\aaaa}+\dddd{\aaaa}{6n^{1+\aaaa}}.$$
	From \eqref{14_10}
		$$\ssss_{n-1}^{(\aaaa)}<-\dddd{1}{n^\aaaa}-\dddd{2\aaaa}{n^{1+\aaaa}}+\dddd{1}{n^\aaaa}+\dddd{\aaaa}{6n^{1+\aaaa}}=-\dddd{11\aaaa}{6n^{1+\aaaa}}.$$
			\end{proof}
				\begin{clm}
$$-\dddd{2}{(n-1)^\aaaa}<\ssss_{n}^{(\aaaa)}<-\dddd{2}{n^\aaaa}.$$
			\end{clm}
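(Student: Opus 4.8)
The plan is to feed the two-sided estimate \eqref{14_10} into the closed form $\ssss_n^{(\aaaa)}=-1/(n-1)^\aaaa+2W_n[\aaaa]$, exactly as in the proof of Claim 4. First I would turn \eqref{14_10} into a bound on $W_n[\aaaa]=S_n[\aaaa]-n^{1-\aaaa}/(1-\aaaa)$: subtracting $n^{1-\aaaa}/(1-\aaaa)$ from each side of \eqref{14_10} gives $-\frac{1}{2n^\aaaa}-\frac{\aaaa}{12n^{1+\aaaa}}<W_n[\aaaa]<-\frac{1}{2n^\aaaa}$, and hence $-\frac{1}{n^\aaaa}-\frac{\aaaa}{6n^{1+\aaaa}}<2W_n[\aaaa]<-\frac{1}{n^\aaaa}$. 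These two inequalities supply the upper and lower bounds separately.

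For the upper bound I would use only the right-hand inequality $2W_n[\aaaa]<-\frac{1}{n^\aaaa}$, which gives $\ssss_n^{(\aaaa)}<-\frac{1}{(n-1)^\aaaa}-\frac{1}{n^\aaaa}$. Since $\aaaa>0$ we have $(n-1)^\aaaa<n^\aaaa$, so $\frac{1}{(n-1)^\aaaa}>\frac{1}{n^\aaaa}$ and the right-hand side is strictly below $-\frac{2}{n^\aaaa}$, proving $\ssss_n^{(\aaaa)}<-\frac{2}{n^\aaaa}$.

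For the lower bound I would use the left-hand inequality to get $\ssss_n^{(\aaaa)}>-\frac{1}{(n-1)^\aaaa}-\frac{1}{n^\aaaa}-\frac{\aaaa}{6n^{1+\aaaa}}$, so it suffices to establish $\frac{1}{(n-1)^\aaaa}>\frac{1}{n^\aaaa}+\frac{\aaaa}{6n^{1+\aaaa}}$, which is exactly the rearrangement of the target $\ssss_n^{(\aaaa)}>-\frac{2}{(n-1)^\aaaa}$. Writing $\frac{1}{(n-1)^\aaaa}=n^{-\aaaa}\llll(1-\frac{1}{n}\rrrr)^{-\aaaa}$ and applying the Bernoulli inequality $\llll(1-\frac{1}{n}\rrrr)^{-\aaaa}>1+\frac{\aaaa}{n}$ (valid since the exponent $-\aaaa<0$), I obtain $\frac{1}{(n-1)^\aaaa}>\frac{1}{n^\aaaa}+\frac{\aaaa}{n^{1+\aaaa}}$, and $\frac{\aaaa}{n^{1+\aaaa}}>\frac{\aaaa}{6n^{1+\aaaa}}$ closes the gap.

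The only mildly delicate point is the lower bound, where the second-order correction $\frac{\aaaa}{12n^{1+\aaaa}}$ carried over from \eqref{14_10} must be absorbed by the linear term $\frac{\aaaa}{n^{1+\aaaa}}$ produced by Bernoulli; the factor $6$ leaves ample room, so no genuine obstacle arises. Both estimates are strict throughout, matching the two strict inequalities in the statement.
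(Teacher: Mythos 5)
Your proposal is correct and follows essentially the same route as the paper's proof of this claim: the upper bound comes from the right-hand side of \eqref{14_10} together with $\frac{1}{(n-1)^\alpha}>\frac{1}{n^\alpha}$, and the lower bound combines the left-hand side of \eqref{14_10} with the Bernoulli estimate $\frac{1}{(n-1)^\alpha}>\frac{1}{n^\alpha}+\frac{\alpha}{n^{1+\alpha}}$, which comfortably absorbs the $\frac{\alpha}{6n^{1+\alpha}}$ correction. Your write-up is in fact slightly more explicit than the paper's, since you spell out the rearrangement showing why the Bernoulli bound suffices.
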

			\begin{proof} From \eqref{14_10}
			$$\ssss_{n}^{(\aaaa)}=-\dddd{1}{(n-1)^\aaaa}+2 \left(S_n[\aaaa]-\frac{n^{1-\aaaa}}{1-\aaaa}\right)<-\dddd{1}{(n-1)^\aaaa}-\dddd{1}{n^\aaaa}<-\dddd{2}{n^\aaaa}.$$
From the Bernoulli inequality
\begin{align}\label{15_10}
	\dddd{1}{(n-1)^\aaaa}=n^{-\aaaa}\llll(1-\dddd{1}{n}\rrrr)^{-\aaaa}>n^{-\aaaa}\llll(1+\dddd{\aaaa}{n}\rrrr)=\dddd{1}{n^\aaaa}+\dddd{\aaaa}{n^{1+\aaaa}}.
	\end{align}
From \eqref{14_10} and \eqref{15_10}
				$$\ssss_{n}^{(\aaaa)}>-\dddd{1}{(n-1)^\aaaa}-\dddd{1}{n^\aaaa}-\dddd{\aaaa}{6 n^{1+\aaaa}}>-\dddd{2}{(n-1)^\aaaa}.$$
			\end{proof}
	\section{ Second-order approximation for the Caputo derivative}
	We use approximation \eqref{11_10} to derive a second-order approximation for the Caputo derivative. By approximating $y''_n$ in \eqref{11_10} using a second-order backward difference approximation
$y''_n=(y_n-2y_{n-1}+y_{n-2})/h^2+O\llll( h \rrrr),$
we obtain the  approximation for the Caputo derivative
\begin{align}
\dddd{1}{2\GGGG(1-\aaaa)h^\aaaa}\sum_{k=0}^{n}\bar{\dddddd}_k^{(\aaaa)} y_{n-k}= y_n^{(\aaaa)}+O\llll(h^2\rrrr),
\end{align}\label{15_20}
where $\bar{\dddddd}_k^{(\aaaa)}=\bar{\ssss}_k^{(\aaaa)}$ for $k=3,\cdots,n$ and
$$\bar{\dddddd}_0^{(\aaaa)}=\bar{\ssss}_0^{(\aaaa)}-\zzzz(\aaaa)+2\zzzz(\aaaa-1)=1-3 \zzzz(\aaaa)+2 \zzzz(\aaaa-1),$$
$$\bar{\dddddd}_1^{(\aaaa)}=\bar{\ssss}_1^{(\aaaa)}+2\zzzz(\aaaa)-4\zzzz(\aaaa-1)=\dddd{1}{2^\aaaa}+4\zzzz(\aaaa)-4 \zzzz(\aaaa-1),$$
$$\bar{\dddddd}_2^{(\aaaa)}=\bar{\ssss}_2^{(\aaaa)}-\zzzz(\aaaa)+2\zzzz(\aaaa-1)=\dddd{1}{3^\aaaa}-1-\zzzz(\aaaa)+2\zzzz(\aaaa-1).$$
Approximation \eqref{19_20} has second-order accuracy  when $y'(0)=0$.
By using the method from the previous section we obtain the second-order approximation
\begin{align*}
\mathcal{A}_n^\dddddd[y]=\dddd{1}{2\GGGG(1-\aaaa)h^\aaaa}\sum_{k=0}^{n}\dddddd_k^{(\aaaa)} y_{n-k}=y_n^{(\aaaa)}+O\llll( h^2 \rrrr),
\end{align*}
where $\dddddd_k^{(\aaaa)}=\bar{\dddddd}_k^{(\aaaa)}$ for $k=0,\cdots,n-2$ and
\begin{align*}
\dddddd_{n-1}^{(\aaaa)}=-\dddd{1}{(n-2)^\aaaa}-2 W_n[\aaaa],\;\dddddd_{n}^{(\aaaa)}=-\dddd{1}{(n-1)^\aaaa}+2 W_n[\aaaa].
\end{align*}
Approximation $\mathcal{A}_n^\dddddd[y]$ has a second-order accuracy $O\llll(h^2\rrrr)$ and accuracy $O\llll( h^{2-\aaaa} \rrrr)$ when $n$ is a {\it small} positive number. When $n=2$, approximation $\mathcal{A}_n^\dddddd[y]$ has weights
$\dddddd_{0}^{(\aaaa)}=1+2 \zeta (\aaaa-1)-3 \zeta (\aaaa)$ and
$$\dddddd_{1}^{(\aaaa)}=-4 \zeta (\aaaa-1)+6 \zeta (\aaaa)+\frac{2^{2-\aaaa}}{1-\aaaa}-2,\; \dddddd_{2}^{(\aaaa)}=1+2 \zeta (\aaaa-1)-3 \zeta (\aaaa)-\frac{2^{2-\aaaa}}{1-\aaaa}.$$
When $n=3$: $\dddddd_{0}^{(\aaaa)}=1+2 \zeta (\aaaa-1)-3 \zeta (\aaaa),\;\dddddd_{1}^{(\aaaa)}=\dddd{1}{2^\aaaa}-4 \zeta (\aaaa-1)+4 \zeta (\aaaa),$
$$\dddddd_{2}^{(\aaaa)}=2 \zeta (\aaaa-1)+\zeta (\aaaa)-2\llll(\dddd{1}{2^\aaaa}-\dddd{ 3^{1-\aaaa}}{1-\aaaa}\rrrr)-3,\;\dddddd_{3}^{(\aaaa)}=2-2 \zeta (\aaaa)-\dddd{2\times 3^{1-\aaaa}}{1-\aaaa}+\dddd{1}{2^\aaaa}.$$
In Table 4 we present the numerical results for the numerical solutions of  Equation I, Equation II and Equation III using approximation $\mathcal{A}_n^\dddddd[y]$  for the Caputo derivative. In Figure 2 we compare the numerical solutions of Equation III using approximations \eqref{10_20}, \eqref{5_30} and $\mathcal{A}_n^\dddddd[y]$  when $\aaaa=0.6$.
\setlength{\tabcolsep}{0.5em}
{ \renewcommand{\arraystretch}{1.1}
\begin{table}[ht]
	\caption{Error and order of numerical solution $NS[10]$ of Equation I and $\aaaa=0.25$, Equation II and $\aaaa=0.5$ and Equation III,$\aaaa=0.75$.}
	\small
	\centering
  \begin{tabular}{ l | c  c | c  c | c  c }
		\hline
		\hline
		\multirow{2}*{ $\quad \boldsymbol{h}$}  & \multicolumn{2}{c|}{{\bf Equation I}} & \multicolumn{2}{c|}{{\bf Equation II}}  & \multicolumn{2}{c}{{\bf Equation III}} \\
		\cline{2-7}  
   & $Error$ & $Order$  & $Error$ & $Order$  & $Error$ & $Order$ \\ 
		\hline \hline 
$0.003125$    & $0.0000171$         & $1.9751$  & $2.1\times 10^{-6}$   & $1.9293$    & $0.0001178$          & $1.9776$       \\ 
$0.0015625$   & $4.3\times 10^{-6}$  & $1.9854$  & $5.4\times 10^{-7}$   & $1.9518$    & $0.0000297$          & $1.9871$       \\ 
$0.00078125$  & $1.1\times 10^{-6}$  & $1.9914$  & $1.4\times 10^{-7}$   & $1.9668$    & $7.5\times 10^{-6}$  & $1.9924$        \\ 
$0.000390625$ & $2.8\times 10^{-7}$  & $1.9949$  & $3.5\times 10^{-8}$   & $1.9770$    & $1.9\times 10^{-6}$  & $1.9955$        \\
\hline
  \end{tabular}
	\end{table}
	}
			\begin{figure}[ht]
  \centering
  \caption{Graph of the exact solution of Equation III  and  numerical solutions $NS[9]$-red,  and $NS[10]$-blue, and $NS[20]$-green for $h=0.1$ and $\alpha=0.6$.}
  \includegraphics[width=0.55\textwidth]{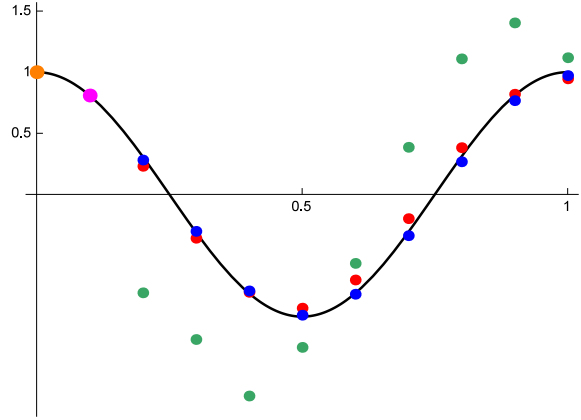}
\end{figure}

Approximations \eqref{4_20} and \eqref{6_05}  have  second-order accuracy and their weights satisfy \eqref{4_30}.  In Lemma 7 and Lemma 8 we prove that $\dddddd_0^{(\alpha)}>0$ and $\dddddd_1^{(\alpha)}<0$. In the proof of Lemma 7 and Lemma 8 we use the estimates from Claim 6 and the  Laurent expansion of the Riemann zeta function
$$\zzzz(\aaaa)=\dddd{1}{\aaaa-1}+\sum_{n=0}^{\infty}(-1)^n \dddd{\gggg_n}{n!}(\aaaa-1)^n,$$
where $\gggg_n$ are the the Stieltjes constants 
$$\gggg_n=\lim_{m\rightarrow \infty}\llll(\sum_{k=1}^{m}\dddd{\ln^n k}{k}-\dddd{\ln^{n+1}m}{n+1}\rrrr).$$
The Stieltjes constant $\gggg_0= 0.57721$ is equal to the Euler-Mascheroni constant and $\gggg_1= -0.0728<0,\; \gggg_2= -0.00969<0,\; \gggg_3= 0.002>0 $. 
\begin{clm} Let $0<\aaaa<1$. Then
$$\dddd{1}{1-\aaaa}>1+\aaaa+\aaaa^2,\quad \dddd{2}{2-\aaaa}<1+\aaaa,$$
$$\dddd{1}{2^\aaaa}<1-\aaaa\ln 2+\dddd{\ln 2}{2}\aaaa^2
,\quad \dddd{1}{3^\aaaa}>1-\aaaa\ln 3.$$
\end{clm}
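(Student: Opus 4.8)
The plan is to handle the four inequalities separately, since each has its own character: the first two are purely algebraic and the last two reduce to properties of the exponential function.

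For $\frac{1}{1-\aaaa} > 1+\aaaa+\aaaa^2$, I would multiply through by the positive factor $1-\aaaa$ and reduce the claim to $1 > (1+\aaaa+\aaaa^2)(1-\aaaa) = 1-\aaaa^3$, which is immediate for $0<\aaaa<1$; equivalently, this is just the geometric series identity $\frac{1}{1-\aaaa} = 1+\aaaa+\aaaa^2+\frac{\aaaa^3}{1-\aaaa}$ with a strictly positive remainder. For $\frac{2}{2-\aaaa} < 1+\aaaa$, clearing the positive factor $2-\aaaa$ turns the claim into $2 < (1+\aaaa)(2-\aaaa) = 2+\aaaa(1-\aaaa)$, i.e. $\aaaa(1-\aaaa)>0$, which holds on $(0,1)$.

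For the remaining two I would write $2^{-\aaaa}=e^{-\aaaa\ln 2}$ and $3^{-\aaaa}=e^{-\aaaa\ln 3}$. The fourth inequality $e^{-\aaaa\ln 3} > 1-\aaaa\ln 3$ is simply the strict form of $e^x>1+x$ (valid for $x\neq 0$) applied at $x=-\aaaa\ln 3\neq 0$. The third inequality is the only one needing a genuine argument: I would set $g(\aaaa)=1-\aaaa\ln 2+\frac{\ln 2}{2}\aaaa^2-e^{-\aaaa\ln 2}$ and show $g>0$ on $(0,1)$. Computing directly gives $g(0)=0$, $g'(\aaaa)=-\ln 2+\aaaa\ln 2+\ln 2\,e^{-\aaaa\ln 2}$ so that $g'(0)=0$, and $g''(\aaaa)=\ln 2\,(1-\ln 2\,e^{-\aaaa\ln 2})$. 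Since $e^{-\aaaa\ln 2}\leq 1$ for $\aaaa\geq 0$ and $\ln 2<1$, we get $\ln 2\,e^{-\aaaa\ln 2}\leq \ln 2<1$, hence $g''>0$ on $[0,\infty)$. Integrating twice from the data $g'(0)=g(0)=0$ yields $g'(\aaaa)>0$ and then $g(\aaaa)>0$ for all $\aaaa>0$, which is the desired bound.

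The main obstacle is the third inequality, for a subtle reason: the $\aaaa^2$ coefficient on the right is $\frac{\ln 2}{2}$, \emph{not} the Taylor coefficient $\frac{(\ln 2)^2}{2}$ of $e^{-\aaaa\ln 2}$. The inequality is true precisely because $\ln 2<1$ forces $\frac{(\ln 2)^2}{2}<\frac{\ln 2}{2}$, and this very same bound $\ln 2<1$ is exactly what keeps $g''$ positive across the whole interval; so the one number-theoretic fact $\ln 2<1$ does all the work. Everything else is routine.
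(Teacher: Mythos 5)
Your proof is correct. The paper disposes of this claim in a single sentence, stating that the estimates ``are obtained from the Maclaurin series expansion of the functions $1/(1-\aaaa)$, $2^{-\aaaa}=e^{-\aaaa\ln 2}$ and $3^{-\aaaa}=e^{-\aaaa\ln 3}$''; unpacked, that means the geometric series $\frac{1}{1-\aaaa}=1+\aaaa+\aaaa^2+\cdots$ with positive remainder for the first two inequalities, the tangent-line bound $e^x>1+x$ for the fourth, and, for the third, the alternating-series truncation $e^{-x}<1-x+\frac{x^2}{2}$ at $x=\aaaa\ln 2>0$ followed by $(\ln 2)^2<\ln 2$. Your treatment of the first, second and fourth inequalities is the same argument in algebraic dress: clearing the denominators in $\frac{1}{1-\aaaa}>1+\aaaa+\aaaa^2$ and $\frac{2}{2-\aaaa}<1+\aaaa$ is equivalent to summing the geometric series, and your fourth step is exactly the paper's first-order exponential bound. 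Where you genuinely depart is the third inequality: instead of truncating the alternating exponential series, you run a convexity argument on $g(\aaaa)=1-\aaaa\ln 2+\frac{\ln 2}{2}\aaaa^2-e^{-\aaaa\ln 2}$, verifying $g(0)=g'(0)=0$ and $g''(\aaaa)=\ln 2\,(1-\ln 2\, e^{-\aaaa\ln 2})>0$, then integrating up twice. That buys a self-contained proof which never invokes the alternating-series remainder estimate (a step that strictly speaking needs justification, e.g.\ via the Lagrange remainder or term-pairing), at the cost of two routine differentiations; the paper's implicit route is shorter once one grants $e^{-x}<1-x+x^2/2$ for $x>0$. Your closing observation is exactly on target and matches the paper's implicit reasoning: in either route the entire content of the third inequality is the single fact $\ln 2<1$, which simultaneously gives $\frac{(\ln 2)^2}{2}<\frac{\ln 2}{2}$ and keeps $g''$ positive on all of $[0,\infty)$.
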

The estimates of Claim 6 are obtained from the Maclaurin series  expansion of the functions $1/(1-\aaaa),2^{-\aaaa}=e^{-\aaaa \ln 2}$ and $3^{-\aaaa}=e^{-\aaaa \ln 3}$.
\begin{lem} 
$$\dddddd_1^{(\aaaa)}=\dddd{1}{2^\aaaa}+4\zzzz(\aaaa)-4 \zzzz(\aaaa-1)<0.$$
\end{lem}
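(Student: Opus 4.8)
The plan is to bound $\delta_1^{(\aaaa)}=2^{-\aaaa}+4\zeta(\aaaa)-4\zeta(\aaaa-1)$ from above by a quadratic in $\aaaa$ that is manifestly negative on $(0,1)$. Since $\zeta(\aaaa)$ enters with the positive coefficient $4$ and $\zeta(\aaaa-1)$ with the negative coefficient $-4$, what I need is an \emph{upper} bound for $\zeta(\aaaa)$ and a \emph{lower} bound for $\zeta(\aaaa-1)$; these are precisely the two places where the Laurent expansion is used, while Claim 6 will dispose of the elementary terms $2^{-\aaaa}$, $1/(1-\aaaa)$ and $1/(2-\aaaa)$. Note that for $\aaaa\in(0,1)$ both $\zeta(\aaaa)$ and $\zeta(\aaaa-1)$ are negative and that $\zeta(\aaaa)\to-\infty$ as $\aaaa\to1^-$, so the only delicate region is near $\aaaa=0$, where $\delta_1^{(\aaaa)}\approx-2/3$ is smallest in magnitude.

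First I would isolate the pole parts. The Laurent expansion about $s=1$ gives $\zeta(\aaaa)=\frac{1}{\aaaa-1}+\gamma_0+\sum_{n\ge1}\frac{(-1)^n\gamma_n}{n!}(\aaaa-1)^n$, and after the substitution $s=\aaaa-1$, $\zeta(\aaaa-1)=\frac{1}{\aaaa-2}+\gamma_0+\sum_{n\ge1}\frac{(-1)^n\gamma_n}{n!}(\aaaa-2)^n$. Using $\frac{1}{\aaaa-1}=-\frac{1}{1-\aaaa}$ and $\frac{1}{\aaaa-2}=-\frac{1}{2-\aaaa}$ and the signs of $\gamma_1,\gamma_2,\gamma_3$ recorded above (the leading correction $-\gamma_1(\aaaa-1)$ is nonpositive and the rest are small), I would extract the two clean auxiliary bounds $\zeta(\aaaa)<\gamma_0-\frac{1}{1-\aaaa}$ and $\zeta(\aaaa-1)>\frac{5}{12}-\frac{1}{2-\aaaa}$; here $\gamma_0$ is the value of the correction at $\aaaa=1$ and $\frac{5}{12}=\zeta(-1)+\tfrac12$ is its value at $\aaaa=0$, which are the relevant extrema of $\zeta(s)-\frac{1}{s-1}$ on these intervals.

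Substituting these two bounds together with the Claim 6 estimates $2^{-\aaaa}<1-\aaaa\ln2+\tfrac{\ln2}{2}\aaaa^2$, $\frac{1}{1-\aaaa}>1+\aaaa+\aaaa^2$ and $\frac{2}{2-\aaaa}<1+\aaaa$ into $\delta_1^{(\aaaa)}$, the two $\gamma_0$ contributions combine into $4\gamma_0$ and the rational terms collapse, leaving the upper bound $\big(4\gamma_0-\tfrac83\big)-(\ln2+2)\,\aaaa-\big(4-\tfrac{\ln2}{2}\big)\aaaa^2$. Since $4\gamma_0\approx2.309<\tfrac83$ and both the $\aaaa$- and $\aaaa^2$-coefficients are negative, this polynomial is strictly negative on $[0,1)$, which proves $\delta_1^{(\aaaa)}<0$.

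The main obstacle is the rigorous justification of the second auxiliary bound $\zeta(\aaaa-1)>\frac{5}{12}-\frac{1}{2-\aaaa}$: the Laurent series is there evaluated at $s-1=\aaaa-2\in(-2,-1)$, far from the expansion centre, so the powers $(\aaaa-2)^n$ reach size $2^n$ and the Stieltjes tail $\sum_{n\ge4}\frac{|\gamma_n|}{n!}2^n$ must be controlled, rather than merely truncated. The saving grace is that the final polynomial inequality only requires $\gamma_0-\frac{5}{12}\approx0.16$ to stay below the threshold $1/4$ coming from the coefficient structure, so there is modest but genuine slack (about $0.09$); consequently the tail can be bounded crudely using standard growth estimates for the Stieltjes constants instead of term by term. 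I would verify the tightest case $\aaaa\to0^+$ first, since that is where the slack is smallest.
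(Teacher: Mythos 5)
Your algebraic skeleton is sound and is essentially the paper's own: the paper likewise takes an upper bound for $\zeta(\alpha)$ and a lower bound for $\zeta(\alpha-1)$ from the Laurent expansion at $s=1$ (inequalities \eqref{18_10} and \eqref{18_20}), feeds them into Claim 6, and lands on a manifestly negative quadratic, $\dddddd_1^{(\aaaa)}<-\frac{1}{2}(8-\ln 2)\aaaa^2-(2+\ln 2)\aaaa-0.63$. The difference is where the Stieltjes corrections are handled: the paper keeps the $\gggg_1$ and $\gggg_2$ terms as functions of $\aaaa$ and disposes of them via $-4\gggg_1-2\gggg_2(\aaaa-2)^2<-4\gggg_1-8\gggg_2<0.37$, whereas you evaluate the entire correction $g(s)=\zeta(s)-1/(s-1)$ at endpoint values, $\gggg_0$ on one side and $\frac{5}{12}=\zeta(-1)+\frac12$ on the other. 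Given your two auxiliary bounds, your final polynomial $\llll(4\gggg_0-\frac83\rrrr)-(2+\ln 2)\aaaa-\llll(4-\frac{\ln 2}{2}\rrrr)\aaaa^2$ is computed correctly and is indeed negative on $[0,1]$.

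The genuine gap is the bound $\zeta(\aaaa-1)>\frac{5}{12}-\frac{1}{2-\aaaa}$, and your proposed repair would fail precisely where you plan to test it first. Since $\zeta(-1)=-\frac{1}{12}$, this inequality is an \emph{equality} at $\aaaa=0$: you are asserting $g(s)>g(-1)$ for $s\in(-1,0)$, a strict monotonicity statement at the point farthest from the expansion centre. No estimate on the absolute size of the Stieltjes tail can deliver that: with $t=2-\aaaa$, the truncation $\gggg_0+\gggg_1 t+\frac{\gggg_2}{2}t^2+\frac{\gggg_3}{6}t^3$ at $t=2$ equals about $0.41494$, which is already \emph{below} $\frac{5}{12}\approx 0.41667$, so near $\aaaa=0$ the inequality is carried entirely by the positivity of the omitted tail, and a crude bound $|\mathrm{tail}|\leq\varepsilon$ leaves the sign undecided no matter how small $\varepsilon$ is. The repair is the slack you yourself computed: the constant term of your final quadratic is $4\gggg_0-4C-1$, negative whenever the constant $C$ in $\zeta(\aaaa-1)>C-\frac{1}{2-\aaaa}$ exceeds $\gggg_0-\frac14\approx 0.327$; so replace $\frac{5}{12}$ by, say, $\frac13$, after which truncation at $n=3$ plus an honest tail estimate (explicit values $\gggg_3\approx 0.00205$, $\gggg_4\approx 0.00232$, $\gggg_5\approx 0.00079$, and for large $n$ a growth bound such as Berndt's $|\gggg_n|\leq 4(2n)!/(n^{n+1}(2\pi)^n)$, giving $\frac{|\gggg_n|}{n!}2^n\leq \frac{4}{n}(2/\pi)^n$) closes the argument with a modest but sufficient margin. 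Your first bound $\zeta(\aaaa)<\gggg_0-\frac{1}{1-\aaaa}$ is tight only as $\aaaa\to 1^-$, where the omitted terms are dominated by $\gggg_1(1-\aaaa)<0$, so it does yield to your tail strategy. It is fair to add that the paper's \eqref{18_10} and \eqref{18_20} are likewise truncations asserted without tail control, so the obstacle you flagged is real for both arguments; the paper's truncation is simply the more fortunate one, since its first omitted coefficient $\gggg_3$ is positive and the inequality is nowhere tight on the interval.
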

\begin{proof} From the Laurent expansion of the Riemann zeta function 
\begin{align}\label{18_10}
\zzzz(\aaaa)<\dddd{1}{\aaaa-1}+\gggg_0-\gggg_1 (\aaaa-1),
\end{align}
\begin{align}\label{18_20}
\zzzz(\aaaa-1)>\dddd{1}{\aaaa-2}+\gggg_0-\gggg_1 (\aaaa-2)+\dddd{\gggg_2}{2} (\aaaa-2)^2.
\end{align}
Then
$$\dddddd_1^{(\aaaa)}<\dddd{1}{2^\aaaa}+\dddd{4}{\aaaa-1}-\dddd{4}{\aaaa-2}-4\gggg_1 -2\gggg_2 (\aaaa-2)^2<0.$$
We have that
$$-4\gggg_1 -2\gggg_2 (\aaaa-2)^2<-4\gggg_1-8\gggg_2<0.37.$$
Then
$$\dddddd_1^{(\aaaa)}<\dddd{1}{2^\aaaa}-\dddd{4}{1-\aaaa}+\dddd{4}{2-\aaaa}+0.37.$$
From Claim 6
$$\dddddd_1^{(\aaaa)}<1-\aaaa\ln 2+\dddd{\ln 2}{2}\aaaa^2-4(1+\aaaa+\aaaa^2)+2(1+\aaaa)+0.37,$$
$$\dddddd_1^{(\aaaa)}<-\dddd{1}{2}\llll(8-\ln 2\rrrr)\aaaa^2-(2+\ln 2)\aaaa-0.63. $$
The function $-\dddd{1}{2}\llll(8-\ln 2\rrrr)\aaaa^2-(2+\ln 2)\aaaa$ is decreasing on $[0,1]$ . Then
$$\dddddd_1^{(\aaaa)}<\dddddd_1^{(0)}<-0.63<0.$$
\end{proof}
\begin{lem} 
$$\dddddd_2^{(\aaaa)}=\dddd{1}{3^\aaaa}-1-\zzzz(\aaaa)+2\zzzz(\aaaa-1)>0.$$
\end{lem}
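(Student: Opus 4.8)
The plan is to follow the template of the proof of Lemma 7, reusing the same Laurent-expansion bounds \eqref{18_10} and \eqref{18_20} together with the elementary estimates of Claim 6, but now arranging every inequality so as to bound $\dddddd_2^{(\aaaa)}$ from \emph{below}. Since the term $-\zzzz(\aaaa)$ enters with a negative sign, I would bound $\zzzz(\aaaa)$ from above by \eqref{18_10}, whereas the term $2\zzzz(\aaaa-1)$ calls for the lower bound \eqref{18_20}; the leading fraction I would control from below by the Claim 6 estimate $\dddd{1}{3^\aaaa}>1-\aaaa\ln 3$. Substituting these three one-sided bounds into $\dddddd_2^{(\aaaa)}=\dddd{1}{3^\aaaa}-1-\zzzz(\aaaa)+2\zzzz(\aaaa-1)$ and simplifying, the two simple poles combine into $\dddd{1}{1-\aaaa}-\dddd{2}{2-\aaaa}$, while the Stieltjes contributions collapse so that $\gggg_0$ survives with coefficient $1$, $\gggg_1$ appears with coefficient $3-\aaaa$, and $\gggg_2$ with coefficient $(\aaaa-2)^2$.

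The second step is to discard the $\aaaa$-dependence of the Stieltjes terms in a favorable direction. Because $\gggg_1<0$ and $\gggg_2<0$, and because $3-\aaaa\le 3$ and $(\aaaa-2)^2\le 4$ on $[0,1]$, I would use $\gggg_1(3-\aaaa)\ge 3\gggg_1$ and $\gggg_2(\aaaa-2)^2\ge 4\gggg_2$, so that the constant part satisfies $\gggg_0+3\gggg_1+4\gggg_2>0.32$. For the poles I would invoke Claim 6 in the form $\dddd{1}{1-\aaaa}>1+\aaaa+\aaaa^2$ and $\dddd{2}{2-\aaaa}<1+\aaaa$, whose difference gives $\dddd{1}{1-\aaaa}-\dddd{2}{2-\aaaa}>\aaaa^2$. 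Collecting everything reduces the assertion to the single quadratic inequality $\dddddd_2^{(\aaaa)}>\aaaa^2-\aaaa\ln 3+0.32$.

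The last step is to verify that the quadratic $\aaaa^2-\aaaa\ln 3+0.32$ is positive throughout $[0,1]$, which follows at once from its negative discriminant $(\ln 3)^2-4\cdot 0.32=(\ln 3)^2-1.28<0$, since $(\ln 3)^2<1.21$; a positive leading coefficient then forces the quadratic to be strictly positive everywhere, whence $\dddddd_2^{(\aaaa)}>0$. I expect the only delicate point to be the bookkeeping of the numerical constants: the final discriminant margin is small (about $-0.07$), so the estimate $\gggg_0+3\gggg_1+4\gggg_2>0.32$ and the sign-directions of the three one-sided bounds must be tracked carefully, exactly as in Lemma 7. There is no conceptual obstacle beyond this careful accounting.
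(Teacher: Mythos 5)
Your proposal is correct and follows the paper's proof of this lemma step for step: the same one-sided bounds \eqref{18_10}, \eqref{18_20} and Claim 6, the same collapse of the Stieltjes terms to $\gggg_0+\gggg_1(3-\aaaa)+\gggg_2(\aaaa-2)^2>\gggg_0+3\gggg_1+4\gggg_2>0.32$, and the same reduction to $\dddddd_2^{(\aaaa)}>\aaaa^2-\aaaa\ln 3+0.32$. The only cosmetic difference is the final step, where you check positivity via the negative discriminant $(\ln 3)^2-1.28<0$ while the paper evaluates the minimum at $\aaaa=\ln 3/2$, giving $0.32-\ln^2 3/4>0.018>0$ --- the same computation in equivalent form.
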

\begin{proof} From \eqref{18_10} and \eqref{18_20}
$$\dddddd_2^{(\aaaa)}>\dddd{1}{3^\aaaa}-1+\dddd{1}{1-\aaaa}-\dddd{2}{2-\aaaa}+\gggg_0+\gggg_1 (3-\aaaa)+\gggg_2 (\aaaa-2)^2.$$
The numbers $\gggg_1$ and $\gggg_2$ are negative. Then
$$\gggg_0+\gggg_1 (3-\aaaa)+\gggg_2 (\aaaa-2)^2>\gggg_0+3\gggg_1+4\gggg_2>0.32,$$
$$\dddddd_2^{(\aaaa)}>\dddd{1}{3^\aaaa}-1+\dddd{1}{1-\aaaa}-\dddd{2}{2-\aaaa}+0.32.$$
From Claim 6
$$\dddddd_2^{(\aaaa)}>1-\aaaa \ln 3-1+1+\aaaa+\aaaa^2-(1+\aaaa)+0.32,$$
$$\dddddd_2^{(\aaaa)}>\aaaa^2-\aaaa \ln 3+0.32.$$
The function $\aaaa^2-\aaaa \ln 3$ has a minimum at the point $\aaaa=\ln 3/2$. Then
$$\dddddd_2^{(\aaaa)}>0.32-\dddd{\ln^2 3}{4}>0.018>0.$$
\end{proof}
	\section{Numerical Solution of the Fractional Relaxation Equation}
The fractional relaxation equation  is a linear ordinary fractional differential equation of order $\aaaa$, where $0<\aaaa<1$.
	\begin{align}\label{19_10}
	y^{(\aaaa)}(x)+D y(x)=F(x),\quad y(0)=y_0.
	\end{align}
	The analytical and numerical solutions of the fractional relaxation equation are discussed in \cite{Diethelm2010,Dimitrov2014,Dimitrov2016,GulsuOzturkAnapali2013,Podlubny1999,Mainardi1996}.
	In this section we derive the numerical solution of the fractional relaxation equation \eqref{19_10}. In Theorem 9 and Theorem 10 we prove that the numerical solution of equation \eqref{19_10} on the interval $[0,X]$ which uses  approximations \eqref{3_10}, \eqref{5_30} and \eqref{6_20} for the Caputo derivative converges to the exact solution when $D>D_0$, where $D_0$ is a negative number. In \cite{Dimitrov2015} we showed that 
\begin{align}\label{19_20}
\bar{y}_1=\dddd{y(0)+\Gamma(2-\aaaa)h^\aaaa F(h)}{1+\Gamma(2-\aaaa)D h^\aaaa}
\end{align}
is an approximation for the value of the exact solution $y(h)$ of equation \eqref{19_10} with accuracy $O\llll(h^2\rrrr)$. Let $h=X/n$ and 
\begin{align}\label{20_10}\tag{*}
	\dddd{1}{h^\aaaa}\llll(\llllll_0y_m-\sum_{k=1}^{m}\llllll_k y_{m-k} \rrrr)\approx y_m^{(\aaaa)},
	\end{align}
be an approximation for the Caputo derivative.
By approximating the Caputo derivative in equation \eqref{19_10} at the point $x_m=m h$ we obtain 
	$$\dddd{1}{h^\aaaa}\llll(\llllll_0 y_m-\sum_{k=1}^{m}\llllll_k y_{m-k} \rrrr)+Dy_m\approx F_m,$$
$$\llll(\llllll_0+Dh^\aaaa\rrrr) y_m-\sum_{k=1}^{m}\llllll_k y_{m-k}\approx h^\aaaa F_m.$$
	The numerical solution $\{u_m\}_{m=0}^n$ of equation \eqref{19_10}, where $u_m\approx y_m$ is computed with $u_0=y_0, u_1=\bar{y}_1$ and
\begin{align}\tag{$\boldsymbol{NS[*]}$}
		u_m=\dddd{1}{\llllll_0+Dh^\aaaa}\llll(h^\aaaa F_m+\sum_{k=1}^{m}\llllll_k u_{m-k}\rrrr).
		\end{align}
Suppose that approximation \eqref{20_10} has accuracy $O\llll(h^{2-\aaaa}\rrrr)$, where the weights  $\llllll_k$ are positive for all $k=0,1,\cdots,m$ and satisfy
\begin{align}\label{20_20}
	\sum_{k=1}^{m}\llllll_k=\llllll_0,\quad 0<\dddd{L}{m^\aaaa}<\llllll_m.
	\end{align}
The weights of approximations \eqref{3_10}, \eqref{5_30} and \eqref{6_20} satisfy \eqref{20_20}.	Let  $E_m h^{2-\aaaa}$ be the error of approximation \eqref{20_10} at the point $x_m=mh$, and $E$ be a positive number such that $|E_m|<E$ for all $m=2,3,\cdots,n$. The error  $e_m=u_m-y_m$ of numerical solution $NS(*)$ satisfies $e_0=0, e_1=\bar{y}_1-y(h)$ and
			$$e_m=\dddd{1}{\llllll_0+D h^\aaaa}\sum_{k=1}^{m}\llllll_k e_{m-k}+A_m h^2,$$
where $A_m=\dddd{E_m}{\llllll_0+Dh^\aaaa}$. Let $A>2E/\llllll_0$ be a positive number such that $|e_1|<A h^{2-\aaaa}$. When $D>0$ the numbers $A_m$ satisfy 	 
			$$|A_m|<\dddd{|E_m|}{\llllll_0}<\dddd{E}{\llllll_0}<A.$$
When $D<0$ 	and $|D| h^\aaaa<\llllll_0/2$  
	$$|A_m|=\dddd{|E_m|}{\llllll_0+Dh^\aaaa}<\dddd{2|E_m| }{\llllll_0}<\dddd{2E}{\llllll_0}<A.$$
			\begin{thm} 	Let $D>0$ and $C>Max\{A,A(\llllll_0+D)/D\}$. The error of numerical solution $NS[*]$ of equation \eqref{19_10} on the interval $[0,X]$ satisfies
			$$|e_m|<Ch^{2-\aaaa}.$$
			\end{thm}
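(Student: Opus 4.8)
The plan is to prove the bound $|e_m| < C h^{2-\aaaa}$ by strong induction on $m$, using the recurrence $e_m = \frac{1}{\llllll_0 + D h^\aaaa}\sum_{k=1}^m \llllll_k e_{m-k} + A_m h^2$ together with the hypotheses $|A_m| < A$ and $\sum_{k=1}^m \llllll_k = \llllll_0$. The base cases $e_0 = 0$ and $|e_1| < A h^{2-\aaaa} \leq C h^{2-\aaaa}$ hold since $C > A$. For the inductive step I would assume $|e_j| < C h^{2-\aaaa}$ for all $j < m$ and estimate $|e_m|$ from the recurrence.

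First I would apply the triangle inequality to the recurrence and use positivity of the weights $\llllll_k$:
\begin{align*}
|e_m| \leq \dddd{1}{\llllll_0 + D h^\aaaa}\sum_{k=1}^m \llllll_k |e_{m-k}| + |A_m| h^2
< \dddd{1}{\llllll_0 + D h^\aaaa}\llll(\sum_{k=1}^m \llllll_k\rrrr) C h^{2-\aaaa} + A h^2.
\end{align*}
Using $\sum_{k=1}^m \llllll_k = \llllll_0$ this becomes
\begin{align*}
|e_m| < \dddd{\llllll_0}{\llllll_0 + D h^\aaaa}\, C h^{2-\aaaa} + A h^2.
\end{align*}
The key algebraic point is that $D > 0$ forces $\llllll_0/(\llllll_0 + D h^\aaaa) < 1$, so the coefficient of $C h^{2-\aaaa}$ is strictly less than $1$; the deficit it creates must absorb the additive $A h^2$ term. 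To close the induction I need
\begin{align*}
\dddd{\llllll_0}{\llllll_0 + D h^\aaaa}\, C h^{2-\aaaa} + A h^2 \leq C h^{2-\aaaa},
\end{align*}
which after dividing by $h^{2-\aaaa}$ and rearranging is equivalent to $A h^\aaaa \leq C\,\dddd{D h^\aaaa}{\llllll_0 + D h^\aaaa}$, i.e. $C \geq A\,\dddd{\llllll_0 + D h^\aaaa}{D}$. Since $h \leq X$ is bounded, I would bound the right-hand side by its value at the largest admissible $h$; the condition $C > A(\llllll_0 + D)/D$ in the hypothesis is exactly the uniform choice that dominates $A(\llllll_0 + D h^\aaaa)/D$ over the relevant range of $h$ (taking $X$ normalized or absorbing the $X^\aaaa$ factor into constants), so the required inequality holds.

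The main obstacle is making the closing inequality $C \geq A(\llllll_0 + D h^\aaaa)/D$ hold \emph{uniformly} in $h$ and $m$, rather than just for a single step. Because the inductive bound is self-referential — the same constant $C$ must work at every level — the contraction factor $\llllll_0/(\llllll_0 + D h^\aaaa)$ being strictly below $1$ is what prevents the error from accumulating across the $O(n)$ steps; this is the qualitative reason $D > 0$ is needed and why no separate geometric-sum argument over $m$ is required. I would verify that the stated threshold on $C$ is chosen precisely so that the per-step inequality is satisfied for all $h$ arising from $h = X/n$, completing the induction.
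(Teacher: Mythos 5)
Your proposal is correct and takes essentially the same route as the paper: induction on $m$, the triangle inequality with positivity of the weights, the identity $\sum_{k=1}^{m}\llllll_k=\llllll_0$ giving the contraction factor $\llllll_0/(\llllll_0+Dh^\aaaa)<1$, and the closing condition $C\geq A(\llllll_0+Dh^\aaaa)/D$, which the hypothesis $C>A(\llllll_0+D)/D$ supplies. The paper makes the same implicit normalization you flag explicitly (bounding $A(\llllll_0+Dh^\aaaa)h^\aaaa$ by $A(\llllll_0+D)h^\aaaa$, which requires $h\leq 1$), so your treatment of the uniformity in $h$ is, if anything, slightly more careful than the original.
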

			\begin{proof} Induction on $m$.
			Suppose that $|e_k|<Ch^{2-\aaaa}$ for $k=1,\cdots,m-1$. The error $e_m$ satisfies
$$|e_m|\leq\dddd{1}{\llllll_0+Dh^\aaaa}\sum_{k=1}^{m-1}\llllll_k |e_{m-k}|+|A_m| h^2.$$
From the induction assumption
$$|e_m|<\dddd{Ch^{2-\aaaa}}{\llllll_0+D h^\aaaa}\sum_{k=1}^{m-1}\llllll_k +A h^2,$$
$$|e_m|<\dddd{C \llllll_0h^{2-\aaaa}}{\llllll_0+Dh^\aaaa} +A h^2=\llll( \dddd{C \llllll_0+A\llll(\llllll_0+D h^\aaaa\rrrr)h^{\aaaa}}{\llllll_0+Dh^\aaaa} \rrrr)h^{2-\aaaa},$$
$$|e_m|<\llll( \dddd{\llllll_0+A\llll(\llllll_0+D\rrrr)h^{\aaaa}/C}{\llllll_0+Dh^\aaaa} \rrrr)C h^{2-\aaaa}.$$
We have that $A(\llllll_0+D)/C<D$, because $C>A(\llllll_0+D)/D$. Hence
$$|e_m|<C h^{2-\aaaa}.$$
	\end{proof}
			\begin{thm} Let $- L/X^\aaaa<D<0$. The error of numerical solution $NS[*]$ of equation \eqref{19_10} on the interval $[0,X]$ satisfies 
			$$|e_m|<Ch^{2-\aaaa}$$
			where $C=Max\llll\{A,\dddd{A X^\aaaa (\llllll_0+D)}{L-D X^\aaaa}\rrrr\}$.
			\end{thm}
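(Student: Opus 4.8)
The plan is to prove the estimate by induction on $m$, in exact parallel with the proof of Theorem 9, with the crucial difference that the damping is no longer supplied by the term $Dh^{\alpha}$ (which is now negative) but by the weight $\lambda_m$ that is absent from the truncated sum. The base case $m=1$ is immediate, since $|e_1|<Ah^{2-\alpha}\leq Ch^{2-\alpha}$ by the choice $C\geq A$. For the inductive step I would assume $|e_k|<Ch^{2-\alpha}$ for all $k<m$ and substitute into the error recurrence $e_m=(\lambda_0+Dh^{\alpha})^{-1}\sum_{k=1}^{m}\lambda_k e_{m-k}+A_mh^2$. Because $e_0=0$ the term $k=m$ drops, and the first identity in \eqref{20_20} gives $\sum_{k=1}^{m-1}\lambda_k=\lambda_0-\lambda_m$; combined with $|A_m|<A$ this produces $|e_m|<\frac{C(\lambda_0-\lambda_m)}{\lambda_0+Dh^{\alpha}}h^{2-\alpha}+Ah^2$. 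Requiring the right-hand side to be at most $Ch^{2-\alpha}$ and dividing through by $h^{2-\alpha}$ reduces everything to the single scalar inequality $Ah^{\alpha}(\lambda_0+Dh^{\alpha})\leq C(\lambda_m+Dh^{\alpha})$.

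The heart of the argument, and the place where the hypothesis $-L/X^{\alpha}<D<0$ is used, is to show that both $\lambda_0+Dh^{\alpha}$ and $\lambda_m+Dh^{\alpha}$ are positive so that this inequality can be read as a genuine contraction. For the second factor I would invoke the lower bound $\lambda_m>L/m^{\alpha}$ from \eqref{20_20} together with $x_m=mh\leq X$, which gives $1/m^{\alpha}=h^{\alpha}/x_m^{\alpha}\geq h^{\alpha}/X^{\alpha}$ and hence $\lambda_m+Dh^{\alpha}>h^{\alpha}(L/X^{\alpha}+D)=h^{\alpha}(L+DX^{\alpha})/X^{\alpha}$; this last quantity is positive precisely because $D>-L/X^{\alpha}$. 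Applying the same estimate at $m=n$ yields $\lambda_0\geq\lambda_n>L/n^{\alpha}>|D|h^{\alpha}$, so the denominator $\lambda_0+Dh^{\alpha}$ is positive as well (and bounded below by $\lambda_0/2$ once $h$ is small, which also justifies the bound $|A_m|<A$ used above).

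With both positivity facts established, I would bound the right factor from below by $h^{\alpha}(L+DX^{\alpha})/X^{\alpha}$, cancel the common $h^{\alpha}$, and take the worst case over $m\leq n$ (attained as $x_m\to X$, i.e. $m=n$), thereby reducing the required inequality to a condition on $C$ of the form $C\geq AX^{\alpha}(\lambda_0+Dh^{\alpha})/(L+DX^{\alpha})$. Collecting the terms and tracking the signs of $L+DX^{\alpha}$ and $\lambda_0+Dh^{\alpha}$ then yields the stated value of $C$, and the induction closes with $|e_m|<Ch^{2-\alpha}$ for every $m$.

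I expect the main obstacle to be exactly this sign-tracking. In Theorem 9 the contraction factor appears for free as the positive damping $Dh^{\alpha}$ in the denominator, whereas here $Dh^{\alpha}<0$ acts against convergence and the only available source of contraction is the gap $\lambda_m=\lambda_0-(\lambda_0-\lambda_m)$ between the full weight sum and the truncated one. The delicate point is to verify that this gap, quantified uniformly in $m\leq n$ by $\lambda_m>L/m^{\alpha}$, still dominates the anti-damping $|D|h^{\alpha}$; this is guaranteed only while $D$ stays above the threshold $-L/X^{\alpha}$, which is why that bound on $D$ is the natural and sharp hypothesis for the theorem.
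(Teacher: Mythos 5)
Your proposal is correct and follows the paper's proof essentially step for step: induction on $m$, the identity $\sum_{k=1}^{m-1}\lambda_k=\lambda_0-\lambda_m$ from \eqref{20_20}, the lower bound $\lambda_m>L h^{\aaaa}/X^{\aaaa}$ obtained from $m\leq n$ and $h=X/n$, and the reduction to a single scalar condition on $C$ (your observation that $\lambda_0\geq\lambda_n>L h^{\aaaa}/X^{\aaaa}>|D|h^{\aaaa}$ even sharpens the paper's positivity discussion for the denominator $\lambda_0+Dh^{\aaaa}$). One remark: your sign-tracking honestly yields the condition $C\geq A X^{\aaaa}(\lambda_0+Dh^{\aaaa})/(L+DX^{\aaaa})$, whose denominator $L+DX^{\aaaa}$ is positive precisely because $D>-L/X^{\aaaa}$, whereas the paper's proof passes through the inequality $L/X^{\aaaa}-A(\lambda_0+D)/C>D$ (where the contraction actually requires $>-D$) and so arrives at the stated denominator $L-DX^{\aaaa}$; your version is the one consistent with the hypothesis, and your closing claim that it ``yields the stated value of $C$'' glosses over this discrepancy, which appears to be a sign slip in the paper rather than a gap in your argument.
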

			\begin{proof} Induction on $m$. 
			Suppose that $|e_k|<Ch^{2-\aaaa}$ for $k=1,\cdots,m-1$. The error $e_m$ satisfies
$$|e_m|\leq\dddd{1}{\llllll_0+Dh^\aaaa}\sum_{k=1}^{m-1}\llllll_k |e_{m-k}|+|A_m| h^2,$$
$$|e_m|<\dddd{Ch^{2-\aaaa}}{\llllll_0+Dh^\aaaa}\sum_{k=1}^{m-1}\llllll_k +A h^2,$$
$$|e_m|<\dddd{C (\llllll_0-\llllll_m)h^{2-\aaaa}}{\llllll_0+Dh^\aaaa} +A h^2=\llll( \dddd{ \llllll_0- \llllll_m+A\llll(\llllll_0+D h^\aaaa \rrrr)h^{\aaaa}/C}{\llllll_0+Dh^\aaaa} \rrrr)Ch^{2-\aaaa}.$$
From \eqref{20_20}
$$\llllll_m>\dddd{L}{m^\aaaa}=\dddd{L}{X^\aaaa}\llll(\dddd{X}{m} \rrrr)^\aaaa>\dddd{L}{X^\aaaa}\llll(\dddd{X}{n} \rrrr)^\aaaa=\dddd{L}{X^\aaaa}h^\aaaa.$$
Then
$$\llllll_m - A\llll(\llllll_0+D h^\aaaa \rrrr)h^{\aaaa}/C>\llll(\dddd{L}{X^\aaaa}- \dddd{A\llll(\llllll_0+D \rrrr)}{C}\rrrr)h^\aaaa.$$
We have that
$$\dddd{L}{X^\aaaa}- \dddd{A\llll(\llllll_0+D \rrrr)}{C}>D,$$
because 
$$C>\dddd{A X^\aaaa (\llllll_0+D)}{L-D X^\aaaa}.$$
\end{proof}
{\bf Example:}	The fractional relaxation equation				
\begin{align} \label{22_10}
y^{(\aaaa)}(x)+Dy(x)=x^{1-\aaaa}E_{1,2-\aaaa}(x)+De^x,\quad y(0)=1,
\end{align}
has the solution $y(x)=e^x$. In Table 5 we compute the maximum error and the order of the numerical solutions of equation \eqref{22_10} which use approximations \eqref{3_10}, \eqref{5_30} and \eqref{6_20} for the Caputo derivative when $D=-1$ and $\aaaa=0.6$. In Table 6 we compute the error and the order of the numerical solution of equation \eqref{22_10} for the $L1$ approximation \eqref{3_10} and values of $D=-2,-5,-7$ and $\aaaa=0.5$. 
 The numerical results from Table 6 suggest that  when $D<-5$, numerical solutions  $NS[1]$, $NS[9]$ and $NS[12]$ of equation \eqref{22_10} are not suitable for practical use.
\setlength{\tabcolsep}{0.5em}
{ \renewcommand{\arraystretch}{1.1}
\begin{table}[ht]
	\caption{Maximum error and order of numerical solutions $NS[1]$, $NS[9]$ and $NS[12]$  of equation \eqref{22_10} with $D=-1,\aaaa=0.6$  on the interval $[0,1]$.}
	\small
	\centering
  \begin{tabular}{ l | c  c | c  c | c  c }
		\hline
		\hline
		\multirow{2}*{ $\quad \boldsymbol{h}$}  & \multicolumn{2}{c|}{{$\boldsymbol{NS[1]}$}} & \multicolumn{2}{c|}{$\boldsymbol{NS[9]}$}  & \multicolumn{2}{c}{$\boldsymbol{NS[12]}$} \\
		\cline{2-7}  
   & $Error$ & $Order$  & $Error$ & $Order$  & $Error$ & $Order$ \\ 
		\hline \hline
$0.003125$    & $0.0004594$  & $1.3890$  & $0.0005372$  & $1.3826$    & $0.0003854$   & $1.3980$       \\ 
$0.0015625$   & $0.0001750$  & $1.3927$  & $0.0002052$  & $1.3883$    & $0.0001462$   & $1.3989$       \\ 
$0.00078125$  & $0.0000665$  & $1.3952$  & $0.0000782$  & $1.3922$    & $0.0000554$   & $1.3994$        \\ 
$0.000390625$ & $0.0000253$  & $1.3968$  & $0.0000297$  & $1.3949$    & $0.0000210$   & $1.3997$        \\
\hline
  \end{tabular}
	\end{table}
	}
\setlength{\tabcolsep}{0.5em}
{ \renewcommand{\arraystretch}{1.1}
\begin{table}[ht]
	\caption{Maximum error and order of numerical solution $NS[1]$,  of equation \eqref{22_10} with $\aaaa=0.5$  on the interval $[0,1]$ and $D=-2,-5,-7$.}
	\small
	\centering
  \begin{tabular}{ l | c  c | c  c | c  c }
		\hline
		\hline
		\multirow{2}*{ $\quad \boldsymbol{h}$}  & \multicolumn{2}{c|}{$\boldsymbol{D=-2}$} & \multicolumn{2}{c|}{$\boldsymbol{D=-5}$}  & \multicolumn{2}{c}{$\boldsymbol{D=-7}$} \\
		\cline{2-7}  
   & $Error$ & $Order$  & $Error$ & $Order$  & $Error$ & $Order$ \\ 
		\hline \hline 
$0.003125$    & $0.00275681$  & $1.4795$  & $1.4\times 10^{6}$ & $2.0272$    & $6.7\times 10^{16}$   & $4.6422$       \\ 
$0.0015625$   & $0.00098584$  & $1.4836$  & $438076.6$         & $1.6668$    & $1.3\times 10^{16}$   & $2.5630$       \\ 
$0.00078125$  & $0.00035150$  & $1.4878$  & $150198.5$         & $1.5443$    & $3.1\times 10^{15}$   & $1.8610$        \\ 
$0.000390625$ & $0.00012503$  & $1.4913$  & $52927.21$         & $1.5048$    & $1.0\times 10^{15}$   & $1.6150$        \\
\hline
  \end{tabular}
	\end{table}
	}
\section{Fourier Transform Method for Constructing  Approximations of the Caputo Derivative}
The Fourier transform method is an important method for constructing approximations for fractional derivatives. The generating function of an approximation is directly related to the Fourier transform of the approximation. The Fourier transform method is used by Ding and Li \cite{DingLi2015,DingLi2016}, Lubich \cite{Lubich1986}, Tian et al. \cite{TianZhouDeng2012} for constructing approximations for the fractional derivative. In \cite{Dimitrov2016} we use the Fourier transform method and the series expansion formula of the polylogarithm function  to derive the asymptotic expansion formula \eqref{5_10} for the Riemann sum approximation of the fractional integral at the right endpoint. The  expansion formula for the left endpoint of \eqref{5_10} is obtained from the Euler-Maclaurin formula for the definite integral. In this section we use the  Fourier transform method and integration by parts to obtain approximations for the Caputo derivative of order $1-\aaaa,2-\aaaa$ and $3-\aaaa$. 

The Fourier Transform of the function $y$ is defined as
$$\mathcal{F}[y(x)](w)=\hat{y}(w)=\int_{-\infty}^{\infty}e^{i w t}y(t)d t.
$$
The Fourier transform has properties 
$$\mathcal{F}[y(x-b)](w)=e^{i w b} \hat{y}(w),\quad \mathcal{F}[y(x)*z(x)](w)= \hat{y}(w) \hat{z}(w),$$
$$\mathcal{F}[D^\aaaa y(x)](w)=(-iw)^\aaaa \hat{y}(w),\quad \mathcal{F}[I^\aaaa y(x)](w)=(-iw)^{-\aaaa} \hat{y}(w).$$
\subsection{Expansion Formula}
Let $S_n[y]$ be the Riemann sum of the formal fractional integral $K^{-\aaaa}y(x)$.
$$S_n[y]=\dddd{1}{h^\aaaa}\sum_{k=1}^{n-1}\dddd{y(x-kh)}{k^{1+\aaaa}}.$$
By applying Fourier transform to $S_n[y]$ and letting $n\rightarrow\infty$
$$\mathcal{F}[S_\infty[y]](w)=\dddd{1}{h^\aaaa}\sum_{k=1}^{\infty}\dddd{e^{ikhw}}{k^{1+\aaaa}}=\dddd{1}{h^\aaaa}Li_{1+\aaaa}\llll(e^{ikhw} \rrrr).$$
From \eqref{4_10} with $s=1+\aaaa$ and $x=e^{ikhw}$ we obtain
\begin{align}\label{24_10}
&Li_{1+\aaaa}\llll(e^{i w h}\rrrr)=\GGGG(-\aaaa)(-i w h)^{\aaaa}+\sum_{m=0}^\infty \dddd{\zzzz(1+\aaaa-m)}{m!}(i w h)^m,\nonumber\\
\dddd{1}{h^\aaaa}&Li_{1+\aaaa}\llll(e^{i w h}\rrrr)=\GGGG(-\aaaa)(-i w)^{\aaaa}+\sum_{m=0}^\infty \dddd{(-1)^m}{m!}\zzzz(1+\aaaa-m)(-i w)^m h^{m-\aaaa}.
\end{align}
Denote by $\mathcal{R}^{(\aaaa)}[y]$ and $\mathcal{L}^{(\aaaa)}[y]$ the left and right asymptotic sums of
$$S_n[y]-\GGGG(-\aaaa)y^{(\aaaa)}(x).$$
By applying inverse Fourier transform to \eqref{24_10} we obtain the expansion formula for the right asymptotic sum.
\begin{align*}
\mathcal{R}^{(\aaaa)}[y]=\sum_{m=0}^\infty \dddd{\zzzz(1+\aaaa-m)}{m!}y^{(m)}(x)h^{m-\aaaa}.
\end{align*}
The result is summarized in Lemma 11.
	\begin{lem} Let $y^{(m)}(0)=0$ for $m=0,1,\cdots$. Then
	\begin{align*}
\dddd{1}{h^\aaaa}\sum_{k=1}^{n-1}\dddd{y(x-kh)}{k^{1+\aaaa}}=\GGGG(-\aaaa)y^{(\aaaa)}(x)+\sum_{m=0}^\infty\dddd{(-1)^m}{m!}\zzzz(1+\aaaa-m)y^{(m)}(x)h^{m-\aaaa}.
\end{align*}
	\end{lem}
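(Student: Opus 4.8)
The plan is to carry out the Fourier transform computation already begun in the paragraphs preceding the statement, using the hypothesis $y^{(m)}(0)=0$ to guarantee that only the right asymptotic sum survives. First I would extend $y$ by zero to $(-\infty,0]$; because every derivative of $y$ vanishes at the origin this extension is smooth, so $\hat y(w)$ is well defined and the shift rule $\mathcal{F}[y(x-kh)](w)=e^{iwkh}\hat y(w)$ may be applied inside the Riemann sum. Since $y(0)=0$, the terms with $k\geq n$ in $\sum_{k=1}^\infty y(x-kh)/k^{1+\aaaa}$ vanish, so the finite sum of the lemma coincides with $S_\infty[y]$ and
$$\mathcal{F}[S_\infty[y]](w)=\dddd{1}{h^\aaaa}\sum_{k=1}^{\infty}\dddd{e^{iwkh}}{k^{1+\aaaa}}\hat y(w)=\dddd{1}{h^\aaaa}Li_{1+\aaaa}\llll(e^{iwh}\rrrr)\hat y(w),$$
which is the identity that produces the polylogarithm.

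Next I would substitute the polylogarithm expansion \eqref{4_10} with argument $e^{iwh}$ and parameter $1+\aaaa$ to obtain the Fourier-side formula \eqref{24_10}. Its leading term carries the factor $(-iw)^\aaaa$ and all remaining terms carry integer powers $(-iw)^m$, and each of these is a Fourier multiplier I can recognize from the transform table in the paper: $(-iw)^\aaaa\hat y(w)$ is the transform of the Caputo derivative $y^{(\aaaa)}(x)$ and $(-iw)^m\hat y(w)$ is the transform of $y^{(m)}(x)$. Applying the inverse transform term by term to \eqref{24_10} multiplied by $\hat y(w)$ therefore gives
$$S_\infty[y]=\GGGG(-\aaaa)y^{(\aaaa)}(x)+\sum_{m=0}^\infty\dddd{(-1)^m}{m!}\zzzz(1+\aaaa-m)y^{(m)}(x)h^{m-\aaaa},$$
and transposing the first term to the left recovers the right asymptotic sum $\mathcal{R}^{(\aaaa)}[y]$ stated before the lemma, which is exactly the claimed expansion.

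The main obstacle is the legitimacy of the termwise inverse transform. The expansion \eqref{4_10} is valid only when $|\ln x|<2\pi$, that is for $|wh|<2\pi$, so inverting term by term requires control of the high-frequency part of $\hat y$; this is where the smoothness of the zero-extended $y$, again a consequence of $y^{(m)}(0)=0$ for every $m$, is essential, since it forces rapid decay of $\hat y(w)$ and lets me read the result as an asymptotic series in $h$. The same hypothesis is what separates this clean statement from the general formula \eqref{6_10}: the vanishing of all derivatives at the origin annihilates the Euler--Maclaurin boundary terms, so the left asymptotic sum $\mathcal{L}^{(\aaaa)}[y]$ contributes nothing. I would also note that the identification of $(-iw)^\aaaa\hat y(w)$ with the transform of $y^{(\aaaa)}(x)$ already uses $y(0)=0$, so no separate endpoint correction enters.
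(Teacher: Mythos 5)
Your proposal is correct and takes essentially the same route as the paper: Fourier transform of the Riemann sum, substitution of the polylogarithm expansion \eqref{4_10} with argument $e^{iwh}$ to get \eqref{24_10}, and termwise inverse transform identifying $(-iw)^{\alpha}\hat{y}(w)$ and $(-iw)^m\hat{y}(w)$ with the transforms of $y^{(\alpha)}(x)$ and $y^{(m)}(x)$. If anything you are more careful than the paper, which asserts the inversion without comment; your zero-extension argument, the observation that $y^{(m)}(0)=0$ makes the finite sum coincide with $S_\infty[y]$ and kills the left asymptotic (Euler--Maclaurin) contribution, and the decay discussion for the restriction $|wh|<2\pi$ supply justification the paper leaves implicit.
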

	From Lemma 11, we obtain the approximation for the Caputo derivative
	\begin{align}\label{25_10}
\dddd{1}{\GGGG(-\aaaa)h^\aaaa}\llll(\sum_{k=1}^{n-1}\dddd{y_{n-k}}{k^{1+\aaaa}}-\zzzz(1+\aaaa)y_n \rrrr)=y_n^{(\aaaa)}&-\dddd{\zzzz(\aaaa)}{\GGGG(-\aaaa)}y'_n h^{1-\aaaa}\\
&+\dddd{\zzzz(\aaaa-1)}{2\GGGG(-\aaaa)}y''_n h^{2-\aaaa}+O\llll( h^{3-\aaaa} \rrrr).\nonumber
\end{align}
Approximation \eqref{25_10} has accuracy $O\llll( h^{3-\aaaa} \rrrr)$ when the function $y(x)$ satisfies $y(0)=y'(0)=y''(0)=0$. Let
$$\oooo_0^{(\aaaa)}=-\zzzz(1+\aaaa),\;\oooo_k^{(\aaaa)}=\dddd{1}{k^{1+\aaaa}},\quad (k=1,\cdots,n-1),$$
and
$$\oooo_n^{(\aaaa)}=-\sum_{k=0}^{n-1}\oooo_k^{(\aaaa)}=\zzzz(1+\aaaa)-\sum_{k=1}^{n-1}\dddd{1}{k^{1+\aaaa}}.$$
\begin{cor}
\begin{align}\label{25_20}
\mathcal{A}^{\oooo}_n[y]=\dddd{1}{\GGGG(-\aaaa)h^\aaaa}\sum_{k=1}^{n}\oooo_k^{(\aaaa)} y_{n-k}= y_n^{(\aaaa)}+O\llll( h^{1-\aaaa}\rrrr).
\end{align}
\end{cor}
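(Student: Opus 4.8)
The plan is to obtain \eqref{25_20} as a direct consequence of Lemma 11 by truncating the full asymptotic sum and then absorbing the truncation error into the endpoint weight $\oooo_n^{(\aaaa)}$, which is exactly the device used to pass from the expansion to a usable approximation. First I would note that the leading term of the expansion in Lemma 11 already isolates $\GGGG(-\aaaa)y^{(\aaaa)}(x)$, and the first correction term is $\zzzz(\aaaa)y'(x)h^{1-\aaaa}$ (the $m=1$ term). Dividing through by $\GGGG(-\aaaa)h^\aaaa$ turns the Riemann sum into the combination $\sum_{k=1}^{n-1}\oooo_k^{(\aaaa)}y_{n-k}$ plus the $y_n$ term carrying $\oooo_0^{(\aaaa)}=-\zzzz(1+\aaaa)$, and the first surviving error is of order $h^{1-\aaaa}$. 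So the structural identity I want is that $\mathcal{A}^{\oooo}_n[y]$ reproduces the Caputo derivative up to $O(h^{1-\aaaa})$, which is precisely the claimed order.

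The key step is justifying the choice $\oooo_n^{(\aaaa)}=-\sum_{k=0}^{n-1}\oooo_k^{(\aaaa)}$. I would argue this by the same normalization principle used throughout the paper: the weights must sum to zero so that $\mathcal{A}^{\oooo}_n[1]=D^\aaaa 1=0$, and this condition forces $\oooo_n^{(\aaaa)}$ to be exactly the stated value. Concretely, I would write $y(x)=z(x)+y(0)+y'(0)x+\cdots$ where $z$ has vanishing derivatives at the origin, apply Lemma 11 to $z$ to get the infinite-series expansion, and then handle the polynomial remainder using the fact that the endpoint weight is calibrated so the constant part is annihilated. Since the corollary only claims accuracy $O(h^{1-\aaaa})$ rather than a higher-order expansion, the contributions of $y'(0)$ and higher derivatives at the origin contribute at order $h^{1-\aaaa}$ or smaller, so they do not spoil the stated estimate; I would make this explicit by noting that the left-endpoint corrections from the Euler–Maclaurin side of \eqref{5_10} enter at order $x^{\aaaa-1}h$ and similar, all of which are $O(h^{1-\aaaa})$ or finer once the $h^{-\aaaa}$ prefactor is accounted for.

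I expect the main obstacle to be bookkeeping of the endpoint terms rather than any deep estimate. Specifically, the Riemann sum in Lemma 11 runs to $k=n-1$, so the raw truncation differs from the ideal infinite sum by the tail $\sum_{k\ge n}k^{-1-\aaaa}y(x-kh)$, and I must confirm that folding this tail together with the definition of $\oooo_n^{(\aaaa)}$ leaves an error no worse than $O(h^{1-\aaaa})$. The clean way to close this is to invoke the zero-sum property: because $\sum_{k=0}^{n}\oooo_k^{(\aaaa)}=0$ by construction, applying $\mathcal{A}^{\oooo}_n$ to $y(x)-y(x_n)$ (a function vanishing at the evaluation point) removes the dominant constant, and the remaining leading behavior is governed by the $m=0$ term $\zzzz(1+\aaaa)$ already cancelled into $\oooo_0^{(\aaaa)}$ and the $m=1$ term of order $h^{1-\aaaa}$. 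Once the normalization is verified, the corollary follows immediately from Lemma 11 with no further computation, so the argument reduces to checking that the single endpoint weight carries the correct value, which is routine.
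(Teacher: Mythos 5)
Your proposal is correct and follows essentially the same route as the paper, which states the corollary as an immediate consequence of Lemma 11: truncate the expansion after the $m=0$ term (absorbed into $\oooo_0^{(\aaaa)}=-\zzzz(1+\aaaa)$), observe that the first surviving term $-\zzzz(\aaaa)y'_n h^{1-\aaaa}$ fixes the order, and calibrate the endpoint weight $\oooo_n^{(\aaaa)}=-\sum_{k=0}^{n-1}\oooo_k^{(\aaaa)}$ so that $\mathcal{A}^{\oooo}_n[1]=D^\aaaa 1=0$, with the decomposition $y(x)=z(x)+y(0)+y'(0)x$ handling nonvanishing data at the origin exactly as the paper does later via Claim 14. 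Your minor slips (the $m=1$ term carries a minus sign, and the relevant left-endpoint Euler--Maclaurin corrections are those of \eqref{27_20} rather than \eqref{5_10}) do not affect the $O\llll(h^{1-\aaaa}\rrrr)$ estimate.
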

Approximation \eqref{25_20} satisfies $\mathcal{A}^{\oooo}_n[1]=0$, because $\sum_{k=1}^{n}\oooo_k^{(\aaaa)}=0$.
\setlength{\tabcolsep}{0.5em}
{ \renewcommand{\arraystretch}{1.1}
\begin{table}[ht]
	\caption{Error and order of numerical solution $NS[34]$ of Equation I with $\aaaa=0.25$, Equation II with $\aaaa=0.5$ and Equation III with $\aaaa=0.75$.}
	\small
	\centering
  \begin{tabular}{ l | c  c | c  c | c  c }
		\hline
		\hline
		\multirow{2}*{ $\quad \boldsymbol{h}$}  & \multicolumn{2}{c|}{{\bf Equation I}} & \multicolumn{2}{c|}{{\bf Equation II}}  & \multicolumn{2}{c}{{\bf Equation III}} \\
		\cline{2-7}  
   & $Error$ & $Order$  & $Error$ & $Order$  & $Error$ & $Order$ \\ 
		\hline \hline 
$0.003125$    & $0.0098745$  & $0.7512$  & $0.0298944$   & $0.5069$    & $0.370920$  & $0.2924$       \\ 
$0.0015625$   & $0.0058683$  & $0.7508$  & $0.0298944$   & $0.5051$    & $0.304271$  & $0.2858$       \\ 
$0.00078125$  & $0.0034881$  & $0.7505$  & $0.0148565$   & $0.5037$    & $0.250614$  & $0.2799$        \\ 
$0.000390625$ & $0.0020736$  & $0.7503$  & $0.0104857$   & $0.5027$    & $0.207125$  & $0.2750$        \\
\hline
  \end{tabular}
	\end{table}
	}
	Denote by $\mathcal{L}[y]=\mathcal{L}^{(1)}[y]$ the left asymptotic sum of
	$$h\sum_{k=1}^{n-1}y(kh)-\int_0^x y(t) d t.$$
From the	Euler-Maclaurin formula 
$$h\sum_{k=1}^{n-1}y(kh)=\int_0^x y(t) d t-(y(0)+y(x))\dddd{h}{2}+\sum_{m=1}^\infty \dddd{B_{2m}h^{2m}}{(2m)!}\llll(y^{(2m-1)}(x)-y^{(2m-1)}(0)\rrrr)$$
we obtain
$$\mathcal{L}[y]=-\dddd{y(0) h}{2}-\sum_{m=1}^\infty \dddd{B_{2m}}{(2m)!}y^{(2m-1)}(0)h^{2m}.$$
Now we use the Euler-Maclaurin formula and integration
	by parts to derive the formula for $\mathcal{L}^{(\aaaa)}[y]$.
			\begin{align*}
\GGGG(1-\aaaa)y^{(\aaaa)}(x)=\int_0^x \dddd{y'(t)}{(x-t)^\aaaa}dt=\int_0^{x/2} \dddd{y'(t)}{(x-t)^\aaaa}dt+\int_{x/2}^x \dddd{y'(t)}{(x-t)^\aaaa}dt.
\end{align*}
	We have that
\begin{align*}
	\int_0^{x/2} \dddd{y'(t)}{(x-t)^\aaaa}dt&=\llll.(x-t)^{-\aaaa} y(t) \rrrr|_0^{x/2}-\int_0^{x/2} y(t) d\llll[(x-t)^{-\aaaa} \rrrr]\\
	&=\llll(\dddd{x}{2}\rrrr)^{-\aaaa} y\llll(\dddd{x}{2}\rrrr)-x^{-\aaaa} y(0)-\aaaa \int_0^{x/2} \dddd{y(t)}{(x-t)^{1+\aaaa}} dt.
\end{align*}
	The gamma function satisfies $\GGGG(1-\aaaa)=-\aaaa \GGGG(-\aaaa)$. Then
			\begin{align}\label{26_10}
\GGGG(-\aaaa)y^{(\aaaa)}(x)=\dddd{ y(0)}{\aaaa x^{\aaaa}}+ \int_0^{x/2} &\dddd{y(t)}{(x-t)^{1+\aaaa}} dt\\
&-\dddd{1}{\aaaa}\llll(\llll(\dddd{x}{2}\rrrr)^{-\aaaa} y\llll(\dddd{x}{2}\rrrr)+\int_{x/2}^x \dddd{y'(t)}{(x-t)^\aaaa}dt\rrrr).\nonumber
\end{align}
Let
$$z(t)=\dddd{y(t)}{(x-t)^{1+\aaaa}}=y(t)Z(t),$$
where $Z(t)=(x-t)^{-1-\aaaa}$. From \eqref{26_10} we have that 
$$\mathcal{L}^{(\aaaa)}[y]=\mathcal{L}[z]+y(0)x^{-\aaaa}/\aaaa .$$
The function $Z(t)$ has derivatives
$$Z^{(k)}(t)=\dddd{\GGGG(1+\aaaa+k)}{\GGGG(1+\aaaa)}\dddd{1}{(x-t)^{1+\aaaa+k}}.$$
From the Leibnitz's rule
$$z^{(2m-1)}(0)=\sum_{k=0}^{2m-1}\binom{2m-1}{k}y^{(2m-1-k)}(0)Z^{(k)}(0).$$
Then
$$\mathcal{L}[z]=-\dddd{z(0) h}{2}-\sum_{m=1}^\infty \dddd{B_{2m}}{(2m)!}z^{(2m-1)}(0)h^{2m},$$
\begin{align}\label{27_10}
\mathcal{L}[z]=&-\dddd{y(0) h}{2 x^{1+\aaaa}}\\
&-\sum_{m=1}^\infty \dddd{B_{2m}}{(2m)!}\llll( \sum_{k=0}^{2m-1}\binom{2m-1}{k}\dddd{\GGGG(1+\aaaa+k)}{\GGGG(1+\aaaa)}\dddd{y^{(2m-1-k)}(0)}{x^{1+\aaaa+k}}\rrrr)h^{2m}.\nonumber
\end{align}
From \eqref{27_10} and Lemma 11 we obtain \eqref{27_20}.
	\begin{lem} Asymptotic expansion formula
	\begin{align}\label{27_20}
\dddd{1}{h^\aaaa}\sum_{k=1}^{n-1}\dddd{y(x-kh)}{k^{1+\aaaa}}=&\GGGG(-\aaaa)y^{(\aaaa)}(x)+\dddd{ y(0)}{\aaaa x^{\aaaa}}-\dddd{y(0) h}{2 x^{1+\aaaa}}\\
&+\sum_{m=0}^\infty\dddd{(-1)^m}{m!}\zzzz(1+\aaaa-m)y^{(m)}(x)h^{m-\aaaa}\nonumber\\
-\sum_{m=1}^\infty \dddd{B_{2m}}{(2m)!}&\llll( \sum_{k=0}^{2m-1}\binom{2m-1}{k}\dddd{\GGGG(1+\aaaa+k)}{\GGGG(1+\aaaa)}\dddd{y^{(2m-1-k)}(0)}{x^{1+\aaaa+k}}\rrrr)h^{2m}.\nonumber
\end{align}
\end{lem}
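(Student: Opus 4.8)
The plan is to assemble \eqref{27_20} from the two pieces that have already been computed: the \emph{right} asymptotic sum supplied by Lemma 11 and the \emph{left} asymptotic sum supplied by \eqref{27_10}. Writing $S_n[y]$ for the Riemann sum on the left-hand side, the governing identity is the splitting
\begin{align*}
S_n[y]-\GGGG(-\aaaa)y^{(\aaaa)}(x)=\mathcal{R}^{(\aaaa)}[y]+\mathcal{L}^{(\aaaa)}[y],
\end{align*}
where $\mathcal{R}^{(\aaaa)}[y]$ collects the contribution of the terms with $k$ small (so that $x-kh$ lies near the right endpoint $x$) and $\mathcal{L}^{(\aaaa)}[y]$ collects the contribution of the terms with $x-kh$ near the left endpoint $0$. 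Adding the two series and moving $\GGGG(-\aaaa)y^{(\aaaa)}(x)$ to the right produces the claimed formula, so the whole argument reduces to identifying each of the two sums.

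First I would invoke Lemma 11, which already records the right asymptotic sum
\begin{align*}
\mathcal{R}^{(\aaaa)}[y]=\sum_{m=0}^\infty\dddd{(-1)^m}{m!}\zzzz(1+\aaaa-m)y^{(m)}(x)h^{m-\aaaa};
\end{align*}
this is exactly the polylogarithm output \eqref{24_10} after inverse Fourier transform, and it furnishes the second series appearing in \eqref{27_20}. Lemma 11 is stated for functions whose derivatives vanish at $0$, but that hypothesis only served to switch off the left-endpoint terms, so the right part it produces is independent of the behaviour of $y$ at $0$ and may be transplanted verbatim.

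Next I would pin down $\mathcal{L}^{(\aaaa)}[y]$. Using the integration-by-parts identity \eqref{26_10}, which recasts $\GGGG(-\aaaa)y^{(\aaaa)}(x)$ in terms of the smooth integrand $z(t)=y(t)/(x-t)^{1+\aaaa}$ on $[0,x/2]$ together with the boundary value $y(0)x^{-\aaaa}/\aaaa$, I would invoke the already-established relation $\mathcal{L}^{(\aaaa)}[y]=\mathcal{L}[z]+y(0)x^{-\aaaa}/\aaaa$. The sum $\mathcal{L}[z]$ is then read off from the Euler--Maclaurin formula for $z$, which together with the Leibniz expansion of $z^{(2m-1)}(0)$ in terms of the derivatives $Z^{(k)}(0)=\GGGG(1+\aaaa+k)/\big(\GGGG(1+\aaaa)x^{1+\aaaa+k}\big)$ gives precisely \eqref{27_10}. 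Substituting \eqref{27_10} and the constant $y(0)x^{-\aaaa}/\aaaa$ yields the terms $y(0)/(\aaaa x^{\aaaa})$, the half-weight boundary term $-y(0)h/(2x^{1+\aaaa})$, and the final Bernoulli double sum of \eqref{27_20}.

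The main obstacle is justifying the clean left/right splitting and, in particular, checking that the Euler--Maclaurin corrections generated at the artificial cut point $x/2$ do not survive. The resolution is that those intermediate corrections are smooth data evaluated at $x/2$ and are cancelled by the corresponding smooth contributions already absorbed into the Fourier-derived right sum $\mathcal{R}^{(\aaaa)}[y]$; only the genuine endpoint data—the derivatives $y^{(m)}(x)$ on the right and $y^{(2m-1-k)}(0)$ on the left—remain. Once this cancellation is taken for granted, as it is built into the definitions of $\mathcal{R}^{(\aaaa)}$ and $\mathcal{L}^{(\aaaa)}$ as the two asymptotic tails of the single quantity $S_n[y]-\GGGG(-\aaaa)y^{(\aaaa)}(x)$, the remaining work is the routine bookkeeping of adding the two series.
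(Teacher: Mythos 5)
Your proposal follows the paper's own proof essentially verbatim: the paper likewise obtains \eqref{27_20} by adding the right asymptotic sum of Lemma 11 (the Fourier-transform/polylogarithm expansion \eqref{24_10}) to the left asymptotic sum \eqref{27_10}, which it derives exactly as you describe via the integration-by-parts identity \eqref{26_10}, the Euler--Maclaurin formula for $z(t)=y(t)(x-t)^{-1-\aaaa}$, and the Leibniz expansion of $z^{(2m-1)}(0)$. Your closing remark about cancellation at the artificial cut point $x/2$ makes explicit a point the paper leaves implicit in its definition of $\mathcal{R}^{(\aaaa)}$ and $\mathcal{L}^{(\aaaa)}$, but otherwise the two arguments coincide.
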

From \eqref{27_20} we obtain the fourth-order approximation
\begin{align}\label{27_30}
\GGGG(-\aaaa)y^{(\aaaa)}(x)=&\dddd{1}{h^\aaaa}\sum_{k=1}^{n-1}\dddd{y(x-kh)}{k^{1+\aaaa}}-\zzzz(1+\aaaa)\dddd{y(x)}{h^\aaaa}+\dddd{y(0)}{\aaaa x^\aaaa}
+\dddd{y(0)h}{2 x^{1+\aaaa}}\\
+\zzzz(\aaaa)&y'(x)h^{1-\aaaa}-\dddd{\zzzz(\aaaa-1)}{2}y''(x)h^{2-\aaaa}+\dddd{\zzzz(\aaaa-2)}{6}y'''(x)h^{3-\aaaa}\nonumber\\
-&\dddd{\zzzz(\aaaa-3)}{24}y^{(4)}(x)h^{4-\aaaa}+\dddd{ x y'(0)+(1+\aaaa)y(0)}{12x^{2+\aaaa}}h^2+O\llll( h^4 \rrrr).\nonumber
\end{align}
{\renewcommand{\arraystretch}{1.1}
\begin{table}[ht]
	\caption{Maximum error and order of approximation \eqref{27_30} for the functions $\tan^{-1} t,x=1,\; \ln (t+1),x=2$ and $\zzzz (t+2),x=3$ with $\aaaa=0.4$.}
	\small
	\centering
  \begin{tabular}{ l | c  c | c  c | c  c }
		\hline
		\hline
		\multirow{2}*{ $\quad \boldsymbol{h}$}  & \multicolumn{2}{c|}{$\boldsymbol{\tan^{-1} t}$} & \multicolumn{2}{c|}{$\boldsymbol{\ln (t+1)}$}  & \multicolumn{2}{c}{$\boldsymbol{\zzzz (t+2)}$} \\
		\cline{2-7}  
   & $Error$ & $Order$  & $Error$ & $Order$  & $Error$ & $Order$ \\ 
		\hline \hline
$0.025$    &  $4.4\times 10^{-9}$  & $3.999$  & $5.0\times 10^{-10}$  & $3.999$    & $4.2\times 10^{-10}$   & $3.999$       \\ 
$0.0125$   &  $2.7\times 10^{-10}$ & $3.999$  & $3.1\times 10^{-11}$  & $4.000$    & $2.6\times 10^{-11}$   & $3.999$       \\ 
$0.00625$  &  $1.7\times 10^{-11}$ & $3.999$  & $2.0\times 10^{-12}$  & $3.969$    & $1.6\times 10^{-12}$   & $4.003$        \\ 
$0.003125$ &  $1.1\times 10^{-12}$ & $3.961$  & $1.5\times 10^{-13}$  & $3.734$    & $1.2\times 10^{-13}$   & $3.750$        \\
\hline
  \end{tabular}
	\end{table}
	}
\subsection{Approximation for the Caputo Derivative of Order $\boldsymbol{2-\aaaa}$}
	By approximating  $y'_n$ in \eqref{25_10} using first-order backward difference approximation 
	$y'_n=(y_n-y_{n-1})/h+O(h)$ we obtain the approximation 
\begin{align}\label{28_10}
\mathcal{A}_n^{\bar{\ssss}}[y]=\dddd{1}{\GGGG(-\aaaa)h^\aaaa}\sum_{k=0}^{n}\bar{\ssss}_k^{(\aaaa)} y_{n-k}= y^{(\aaaa)}_n+O\llll(h^{2-\aaaa}\rrrr),
\end{align}
where
$$\bar{\ssss}_0^{(\aaaa)}=\zzzz(\aaaa)-\zzzz(1+\aaaa),\;\bar{\ssss}_1^{(\aaaa)}=1-\zzzz(\aaaa),$$
$$\bar{\ssss}_k^{(\aaaa)}=\dddd{1}{k^{1+\aaaa}}\quad (k=2,\cdots,n-1),\quad \bar{\ssss}_{n}^{(\aaaa)}=\zzzz(1+\aaaa)-\sum_{k=1}^{n-1}\dddd{1}{k^{1+\aaaa}}.$$
Approximation \eqref{28_10} satisfies $\mathcal{A}^{\bar{\ssss}}_n[1]=0$. The accuracy of approximation \eqref{28_10} is $O\llll(h^{2-\aaaa}\rrrr)$ when the function $y$ satisfies $y'(0)=0$.
Denote
$$W_n[\aaaa]=n S_n[\aaaa+1]-S_n[\aaaa]+\dddd{n^{1-\aaaa}}{\aaaa(1-\aaaa)}.$$
\begin{clm} Let $y(x)=x$. Then
\begin{align*}
\mathcal{A}^{\bar{\ssss}}_n[x]-y^{(\aaaa)}(x)=\dddd{W_n[\aaaa]h^{1-\aaaa}}{\GGGG(-\aaaa)}.
\end{align*}
\end{clm}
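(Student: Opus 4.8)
The plan is to follow the template of Claim 3. Since $y(x)=x$ and $x=nh$, each sample value is $y_{n-k}=x-kh=(n-k)h$, so that
$$\mathcal{A}^{\bar{\ssss}}_n[x]=\dddd{1}{\GGGG(-\aaaa)h^\aaaa}\sum_{k=0}^{n}\bar{\ssss}_k^{(\aaaa)}(x-kh).$$
I would first invoke the fact that approximation \eqref{28_10} satisfies $\mathcal{A}^{\bar{\ssss}}_n[1]=0$, i.e.\ the weights sum to zero, so the constant contribution $x\sum_{k=0}^n\bar{\ssss}_k^{(\aaaa)}$ vanishes. Only the linear part survives, leaving
$$\mathcal{A}^{\bar{\ssss}}_n[x]=-\dddd{h^{1-\aaaa}}{\GGGG(-\aaaa)}\sum_{k=1}^{n}k\,\bar{\ssss}_k^{(\aaaa)}.$$

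Next I would evaluate the weighted sum by inserting the explicit weights. The interior terms contribute $\sum_{k=2}^{n-1}k\cdot k^{-1-\aaaa}=\sum_{k=2}^{n-1}k^{-\aaaa}$, and adjoining the $k=1$ term $\bar{\ssss}_1^{(\aaaa)}=1-\zzzz(\aaaa)$ lets the constant $1$ fold into the sum as $1^{-\aaaa}$, so the combination collapses to $\sum_{k=1}^{n-1}k^{-\aaaa}-\zzzz(\aaaa)=S_n[\aaaa]$. The boundary term is $n\,\bar{\ssss}_n^{(\aaaa)}=n\llll(\zzzz(1+\aaaa)-\sum_{k=1}^{n-1}k^{-1-\aaaa}\rrrr)=-n\,S_n[\aaaa+1]$. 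Hence $\sum_{k=1}^{n}k\,\bar{\ssss}_k^{(\aaaa)}=S_n[\aaaa]-n\,S_n[\aaaa+1]$, which gives
$$\mathcal{A}^{\bar{\ssss}}_n[x]=\dddd{h^{1-\aaaa}}{\GGGG(-\aaaa)}\llll(n\,S_n[\aaaa+1]-S_n[\aaaa]\rrrr).$$

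Finally I would compute the exact value. The Caputo derivative of $y(x)=x$ is $y^{(\aaaa)}(x)=x^{1-\aaaa}/\GGGG(2-\aaaa)$, and with $x=nh$ this equals $n^{1-\aaaa}h^{1-\aaaa}/\GGGG(2-\aaaa)$. Using $\GGGG(2-\aaaa)=(1-\aaaa)\GGGG(1-\aaaa)=-\aaaa(1-\aaaa)\GGGG(-\aaaa)$, where the identity $\GGGG(1-\aaaa)=-\aaaa\GGGG(-\aaaa)$ is the one recorded in Section 5, I can rewrite $y^{(\aaaa)}(x)=-\dddd{n^{1-\aaaa}}{\aaaa(1-\aaaa)}\dddd{h^{1-\aaaa}}{\GGGG(-\aaaa)}$. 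Subtracting this from the previous display and factoring out $h^{1-\aaaa}/\GGGG(-\aaaa)$ produces
$$\mathcal{A}^{\bar{\ssss}}_n[x]-y^{(\aaaa)}(x)=\dddd{h^{1-\aaaa}}{\GGGG(-\aaaa)}\llll(n\,S_n[\aaaa+1]-S_n[\aaaa]+\dddd{n^{1-\aaaa}}{\aaaa(1-\aaaa)}\rrrr)=\dddd{W_n[\aaaa]h^{1-\aaaa}}{\GGGG(-\aaaa)},$$
by the definition of $W_n[\aaaa]$ introduced just before the claim.

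This computation is entirely mechanical, and I do not anticipate a genuine obstacle. The only places demanding care are the sign bookkeeping in the gamma reduction $\GGGG(2-\aaaa)=-\aaaa(1-\aaaa)\GGGG(-\aaaa)$ and the index manipulation that absorbs the modified $k=1$ weight into the interior sum to recover $S_n[\aaaa]$ cleanly; both are routine once the zero-sum property of the weights has been used to eliminate the constant term.
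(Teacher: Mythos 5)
Your proof is correct and takes essentially the same route as the paper's: both use $\sum_{k=0}^{n}\bar{\sigma}_k^{(\alpha)}=0$ to reduce $\mathcal{A}^{\bar{\sigma}}_n[x]$ to $-\frac{h^{1-\alpha}}{\Gamma(-\alpha)}\sum_{k=1}^{n}k\bar{\sigma}_k^{(\alpha)}$, evaluate that sum from the explicit weights as $S_n[\alpha]-nS_n[\alpha+1]$, and conclude with $\Gamma(2-\alpha)=-\alpha(1-\alpha)\Gamma(-\alpha)$ and the definition of $W_n[\alpha]$. The only cosmetic difference is that the paper channels the bookkeeping through the weights $\omega_k^{(\alpha)}$, while you absorb the modified $k=1$ weight into the sum directly.
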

\begin{proof}
\begin{align*}
\GGGG(-\aaaa&)h^\aaaa\mathcal{A}^{\bar{\ssss}}_n[x]=\sum_{k=0}^{n}\bar{\ssss}_k^{(\aaaa)} (x-kh)=-h\sum_{k=1}^{n}k\bar{\ssss}_k^{(\aaaa)}\\
&=-h\llll(\sum_{k=1}^{n-1}k\bar{\ssss}_k^{(\aaaa)}+n \bar{\ssss}_n^{(\aaaa)} \rrrr)=h\llll(\zzzz(\aaaa)-n\oooo_n^{(\aaaa)}-\sum_{k=1}^{n-1}k\oooo_k^{(\aaaa)}\rrrr),
\end{align*}
$$\mathcal{A}^{\bar{\ssss}}_n[x]=\dddd{h^{1-\aaaa}}{\GGGG(-\aaaa)}\llll(\llll(\zzzz(\aaaa)-\sum_{k=1}^{n-1}\dddd{1}{k^{\aaaa}}  \rrrr)-n\llll(\zzzz(1+\aaaa)-\sum_{k=1}^{n-1}\dddd{1}{k^{1+\aaaa}} \rrrr)\rrrr).$$
Then
\begin{align*}
\mathcal{A}^{\bar{\ssss}}_n[y]-y^{(\aaaa)}(x)&=\dddd{h^{1-\aaaa}}{\GGGG(-\aaaa)}(n S_n(\aaaa+1)-S_n(\aaaa))-\dddd{x^{1-\aaaa}}{\GGGG(2-\aaaa)}\\
&=\dddd{h^{1-\aaaa}}{\GGGG(-\aaaa)}\llll(n S_n(\aaaa+1)-S_n(\aaaa)+\dddd{n^{1-\aaaa}}{\aaaa(1-\aaaa)}\rrrr).
\end{align*}
\end{proof}
Represent the function $y(x)$ as
$$y(x)=y(x)-y(0)-y'(0)x+y(0)+y'(0)x=z(x)+y(0)+y'(0)x.$$
The function $z(x)=y(x)-y(0)-y'(0)x$ satisfies $z(0)=z'(0)=0$.
$$\mathcal{A}^{\bar{\ssss}}_n[z]=z^{(\aaaa)}(x)+O\llll( h^{2-\aaaa} \rrrr)=y^{(\aaaa)}(x)-y'(0)\dddd{x^{1-\aaaa}}{\GGGG(2-\aaaa)}+O\llll( h^{2-\aaaa} \rrrr).$$
Then
$$\mathcal{A}^{\bar{\ssss}}_n[y]=\mathcal{A}^{\bar{\ssss}}_n[z]+y(0)\mathcal{A}^{\bar{\ssss}}_n[1]+y'(0)\mathcal{A}^{\bar{\ssss}}_n[x],$$
$$\mathcal{A}^{\bar{\ssss}}_n[y]=y^{(\aaaa)}(x)+y'(0)\mathcal{A}^{\bar{\ssss}}_n[x]-y'(0)\dddd{x^{1-\aaaa}}{\GGGG(2-\aaaa)}+O\llll( h^{2-\aaaa} \rrrr).$$
From Claim 14
$$\dddd{1}{\GGGG(-\aaaa)h^\aaaa}\sum_{k=0}^{n}\bar{\ssss}_k^{(\aaaa)} y_{n-k}= y^{(\aaaa)}_n+\llll(\dddd{y'(0) W_n[\aaaa]}{\GGGG(-\aaaa)}\rrrr)h^{1-\aaaa}+O\llll(h^{2-\aaaa}  \rrrr)$$
By approximating $y'_0$ using first-order forward difference approximation 
$$y'_0=\dddd{y_1-y_0}{h}+O(h)$$
we obtain the approximation for the Caputo derivative of order $2-\aaaa$
\begin{align}\label{29_10}
\dddd{1}{\GGGG(-\aaaa)h^\aaaa}\sum_{k=0}^{n}\ssss_k^{(\aaaa)} y_{n-k}= y_n^{(\aaaa)}+O\llll(h^{2-\aaaa}\rrrr),
\end{align}
where $\ssss_0^{(\aaaa)}=\zzzz(\aaaa)-\zzzz(1+\aaaa),\;\ssss_1^{(\aaaa)}=1-\zzzz(\aaaa),$
$$\ssss_k^{(\aaaa)}=\dddd{1}{k^{1+\aaaa}}\quad (k=2,\cdots,n-2),$$
$$\ssss_{n-1}^{(\aaaa)}=\dddd{1}{(n-1)^{1+\aaaa}}-\dddd{n^{1-\aaaa}}{\aaaa(1-\aaaa)}-n S_n[1+\aaaa]+S_n[\aaaa],$$
$$\ssss_{n}^{(\aaaa)}=(n-1)S_n[1+\aaaa]+S_n[\aaaa]+\dddd{n^{1-\aaaa}}{\aaaa(1-\aaaa)}.$$
When $n=2$ approximation \eqref{29_10} has weights $\ssss_0^{(\aaaa)}=\zzzz(\aaaa)-\zzzz(1+\aaaa)$ and
$$\ssss_1^{(\aaaa)}=2\zzzz(1+\aaaa)-2\zzzz(\aaaa)-\dddd{2^{1-\aaaa}}{\aaaa(1-\aaaa)},\; \ssss_2^{(\aaaa)}=\zzzz(\aaaa)-\zzzz(1+\aaaa)+\dddd{2^{1-\aaaa}}{\aaaa(1-\aaaa)}.$$
From the formula for sum of powers \eqref{14_05}
$$S_n[\aaaa]=\dddd{n^{1-\aaaa}}{1-\aaaa}\sum_{m=0}^{\infty}\binom{1-\aaaa}{m}\dddd{B_m}{n^m},$$
The Bernoulli number $B_1=-1/2$ and $B_{2m+1}=0$ for $m\geq 1$. Then
\begin{align}\label{30_10}
\dddd{n^{1-\aaaa}}{1-\aaaa}\sum_{m=1}^{\infty}\binom{1-\aaaa}{2m}\dddd{B_{2m}}{n^{2m}}=S_n[\aaaa]-\dddd{n^{1-\aaaa}}{1-\aaaa}+\dddd{1}{2n^\aaaa}.
\end{align}
From \eqref{30_10}
$$S_n[\aaaa]=\dddd{n^{1-\aaaa}}{1-\aaaa}-\dddd{1}{2n^\aaaa}+O\llll( \dddd{1}{n^{1+\aaaa}}\rrrr),\; S_n[\aaaa+1]=-\dddd{n^{-\aaaa}}{\aaaa}-\dddd{1}{2n^{1+\aaaa}}+O\llll( \dddd{1}{n^{2+\aaaa}}\rrrr).
$$
Then
$$\tilde{\ssss}_n^{(\aaaa)}= (1-n)\llll(\dddd{1}{\aaaa n^\aaaa} +\dddd{1}{2 n^{1+\aaaa}}\rrrr)-\llll(\dddd{n^{1-\aaaa}}{1-\aaaa}-\dddd{1}{2n^\aaaa} \rrrr)+\dddd{n^{1-\aaaa}}{\aaaa(1-\aaaa)}+O\llll(\dddd{1}{n^{1+\aaaa}} \rrrr),$$
$$\ssss_n^{(\aaaa)}= \dddd{1}{\aaaa n^\aaaa} +\llll(\dddd{n^{1-\aaaa}}{\aaaa(1-\aaaa)}-\dddd{n^{1-\aaaa}}{\aaaa} -\dddd{n^{1-\aaaa}}{1-\aaaa}\rrrr)+O\llll(\dddd{1}{n^{1+\aaaa}} \rrrr)=\dddd{1}{\aaaa n^\aaaa}+O\llll(\dddd{1}{n^{1+\aaaa}} \rrrr).$$
The weight $\ssss_n^{(\aaaa)}$ of approximation \eqref{29_10} is approximately $1/(\aaaa n^\aaaa)$. Approximation \eqref{29_10} has order $2-\aaaa$ and its weights satisfy \eqref{3_20}. In Table 9 we compute the error and the order of the numerical solution of Equation I, Equation II and Equation III which uses approximation \eqref{29_10} for the Caputo derivative. 
In all experiments in Table 1, Table 3, Table 5 and Table 9 the errors of numerical solution $NS[12]$ are smaller than the errors of numerical solutions $NS[1]$ and $NS[9]$.
{ \renewcommand{\arraystretch}{1.1}
\begin{table}[ht]
	\caption{Error and order of numerical solution $NS[40]$ of Equation I with $\aaaa=0.25$, Equation II with $\aaaa=0.5$ and Equation III with $\aaaa=0.75$.}
	\small
	\centering
  \begin{tabular}{ l | c  c | c  c | c  c }
		\hline
		\hline
		\multirow{2}*{ $\quad \boldsymbol{h}$}  & \multicolumn{2}{c|}{{\bf Equation I}} & \multicolumn{2}{c|}{{\bf Equation II}}  & \multicolumn{2}{c}{{\bf Equation III}} \\
		\cline{2-7}  
   & $Error$ & $Order$  & $Error$ & $Order$  & $Error$ & $Order$ \\ 
		\hline \hline
$0.003125$    & $0.0000263$          & $1.7472$  & $0.0000395122$        & $1.4982$    & $0.0022174$  & $1.2479$       \\ 
$0.0015625$   & $7.8\times 10^{-6}$  & $1.7486$  & $0.0000139784$        & $1.4991$    & $0.0009328$  & $1.2492$       \\ 
$0.00078125$  & $2.3\times 10^{-6}$  & $1.7493$  & $4.9\times 10^{-6}$   & $1.4995$    & $0.0003923$  & $1.2497$        \\ 
$0.000390625$ & $6.9\times 10^{-7}$  & $1.7497$  & $1.7\times 10^{-6}$   & $1.4998$    & $0.0001649$  & $1.2499$        \\
\hline
  \end{tabular}
	\end{table}
	}
\subsection{Approximation for the Caputo Derivative of Order $\boldsymbol{3-\aaaa}$}
From \eqref{27_20}, approximation \eqref{25_20} has asymptotic expansion
	\begin{align}\label{31_10}
\dddd{1}{h^\aaaa}\sum_{k=0}^{n}\oooo_k^{(\aaaa)} y(x-kh&)=\GGGG(-\aaaa)y^{(\aaaa)}(x)+\sum_{m=1}^\infty\dddd{(-1)^m}{m!}\zzzz(1+\aaaa-m)y^{(m)}(x)h^{m-\aaaa}\nonumber\\
-\sum_{m=1}^\infty \dddd{B_{2m}}{(2m)!}&\llll( \sum_{k=0}^{2m-2}\binom{2m-1}{k}\dddd{\GGGG(1+\aaaa+k)}{\GGGG(1+\aaaa)}\dddd{y^{(2m-1-k)}(0)}{x^{1+\aaaa+k}}\rrrr)h^{2m}
\end{align}
In order to construct approximations for the Caputo derivative we express expansion formula \eqref{31_10} in the following form
	\begin{align*}
\dddd{1}{h^\aaaa}\sum_{k=0}^{n}\oooo_k^{(\aaaa)} y(x-k h)=&\GGGG(-\aaaa)y^{(\aaaa)}(x)+\sum_{m=1}^\infty K_m(\aaaa)y^{(m)}(0) h^{m-\aaaa}\\
&+\sum_{m=1}^\infty\dddd{(-1)^m}{m!}\zzzz(1+\aaaa-m)y^{(m)}(x)h^{m-\aaaa}.
\end{align*}
In the previous section we showed that
$$K_1(\aaaa)=n S_n[1+\aaaa]-S_n[\aaaa]+\dddd{n^{1-\aaaa}}{\aaaa(1-\aaaa)}.$$
In this section we obtain the formula \eqref{33_10} for the coefficient $K_2(\aaaa)$ and an approximation \eqref{34_10} for the Caputo derivative of order $3-\aaaa$. By changing the order of summation in \eqref{31_10} 
	\begin{align*}
\dddd{1}{h^\aaaa}\sum_{k=0}^{n}\oooo_k^{(\aaaa)} y(x-kh)=&\GGGG(-\aaaa)y^{(\aaaa)}(x)+\sum_{m=1}^\infty\dddd{(-1)^m}{m!}\zzzz(1+\aaaa-m)y^{(m)}(x)h^{m-\aaaa}\\
-\sum_{p=1}^\infty &\llll( \sum_{m=\left\lceil  p/2\right\rceil}^{\infty}\binom{2m-1}{p}\dddd{\GGGG(2m+\aaaa-p)}{\GGGG(1+\aaaa)x^{2m+\aaaa-p}}\dddd{B_{2m}}{(2m)!}h^{2m}\rrrr)y^{(p)}(0).
\end{align*}
We have that $x^{2m+\aaaa-p}=n^{2m+\aaaa-p}h^{2m+\aaaa-p}$. Then
$$K_p(\aaaa)= -\sum_{m=\left\lceil  p/2\right\rceil}^{\infty}\binom{2m-1}{p}\dddd{\GGGG(2m+\aaaa-p)}{\GGGG(1+\aaaa)n^{2m+\aaaa-p}}\dddd{B_{2m}}{(2m)!}.$$
When $p=2$ we obtain
$$K_2(\aaaa)=-\dddd{n^{2-\aaaa}}{2} \sum_{m=1}^{\infty}(2m-1)(2m-2)\dddd{\GGGG(2m+\aaaa-2)}{\GGGG(1+\aaaa)(2m)!}\dddd{B_{2m}}{n^{2m}}.$$
From the identity
$$(2m-1)(2m-2)=(2m-2+\aaaa)(2m-1+\aaaa)-2\aaaa(2m-2+\aaaa)+\aaaa(\aaaa-1)$$
we obtain
\begin{align*}
K_2(\aaaa)=&-\dddd{n^{2-\aaaa}}{2} \sum_{m=1}^{\infty}\dddd{(1+\aaaa)(2+\aaaa)\cdots (2m-1+\aaaa)}{(2m)!}\dddd{B_{2m}}{n^{2m}}\\
&+n^{2-\aaaa} \sum_{m=1}^{\infty}\dddd{\aaaa(1+\aaaa)(2+\aaaa)\cdots (2m-2+\aaaa)}{(2m)!}\dddd{B_{2m}}{n^{2m}}\\
&-\dddd{n^{2-\aaaa}}{2} \sum_{m=1}^{\infty}\dddd{(\aaaa-1)a(1+\aaaa)(2+\aaaa)\cdots (2m-3+\aaaa)}{(2m)!}\dddd{B_{2m}}{n^{2m}},
\end{align*}
\begin{align*}
K_2(\aaaa)=&- \dddd{n^{2-\aaaa}}{2\aaaa}\sum_{m=1}^{\infty}\dddd{\aaaa(1+\aaaa)(2+\aaaa)\cdots (2m-1+\aaaa)}{(2m)!}\dddd{B_{2m}}{n^{2m}}\\
&+\dddd{n^{2-\aaaa}}{\aaaa-1} \sum_{m=1}^{\infty}\dddd{(\aaaa-1)\aaaa(1+\aaaa)(2+\aaaa)\cdots (2m-2+\aaaa)}{(2m)!}\dddd{B_{2m}}{n^{2m}}\\
&-\dddd{n^{2-\aaaa}}{2(\aaaa-2)} \sum_{m=1}^{\infty}\dddd{(\aaaa-2)(\aaaa-1)a(1+\aaaa)(2+\aaaa)\cdots (2m-3+\aaaa)}{(2m)!}\dddd{B_{2m}}{n^{2m}},
\end{align*}
\begin{align*}
K_2(\aaaa)=&\dddd{n^{2}}{2}\llll[ \dddd{n^{-\aaaa}}{-\aaaa}\sum_{m=1}^{\infty}\binom{-\aaaa}{2m}\dddd{B_{2m}}{n^{2m}}\rrrr]-n\llll[\dddd{n^{1-\aaaa}}{1-\aaaa} \sum_{m=1}^{\infty}\binom{1-\aaaa}{2m}\dddd{B_{2m}}{n^{2m}}\rrrr]\\
&+\dddd{1}{2}\llll[\dddd{n^{2-\aaaa}}{2-\aaaa} \sum_{m=1}^{\infty}\binom{2-\aaaa}{2m}\dddd{B_{2m}}{n^{2m}}\rrrr].
\end{align*}
From \eqref{30_10}
$$\dddd{n^{-\aaaa}}{-\aaaa}\sum_{m=1}^{\infty}\binom{-\aaaa}{2m}\dddd{B_{2m}}{n^{2m}}=S_n[\aaaa+1]+\dddd{n^{-\aaaa}}{\aaaa}+\dddd{1}{2n^{1+\aaaa}},$$
$$\dddd{n^{2-\aaaa}}{2-\aaaa}\sum_{m=1}^{\infty}\binom{2-\aaaa}{2m}\dddd{B_{2m}}{n^{2m}}=S_n[\aaaa-1]-\dddd{n^{2-\aaaa}}{2-\aaaa}+\dddd{1}{2n^{\aaaa-1}}.$$
Then
\begin{align*}
K_2(\aaaa)=&\dddd{n^{2}}{2}\llll[S_n[\aaaa+1]+\dddd{n^{-\aaaa}}{\aaaa}+\dddd{1}{2n^{1+\aaaa}}\rrrr]-n\llll[S_n[\aaaa]-\dddd{n^{1-\aaaa}}{1-\aaaa}+\dddd{1}{2n^\aaaa}\rrrr]\\
&+\dddd{1}{2}\llll[S_n[\aaaa-1]-\dddd{n^{2-\aaaa}}{2-\aaaa}+\dddd{1}{2n^{\aaaa-1}}\rrrr],
\end{align*}
\begin{align}\label{33_10}
K_2(\aaaa)=\dddd{n^{2}}{2}S_n[\aaaa+1]-n S_n[\aaaa]+\dddd{1}{2}S_n[\aaaa-1]+\dddd{n^{2-\aaaa}}{\aaaa(\aaaa-1)(\aaaa-2)}.
\end{align}
Approximation \eqref{25_20} has expansion of order $3-\aaaa$
	\begin{align}\label{33_20}
\dddd{1}{h^\aaaa}\sum_{k=0}^{n}\oooo_k^{(\aaaa)} y_{n-k}=\GGGG(-\aaaa)y^{(\aaaa)}_n+&\llll( K_1(\aaaa) y'_0 -\zzzz(\aaaa) y'_n\rrrr) h^{1-\aaaa}\\
&+\llll( K_2(\aaaa) y''_0 -\dddd{1}{2}\zzzz(\aaaa) y''_n\rrrr) h^{2-\aaaa}+O\llll(h^{3-\aaaa} \rrrr).\nonumber
\end{align}
By  approximating $y'_0,y''_0,y'_n,y''_n$ in \eqref{33_20} using the approximations
$$y'_0=\dddd{1}{h}\llll(-\dddd{3}{2}y_0+2y_1-\dddd{1}{2}y_2 \rrrr)+O\llll(h^2\rrrr),\; y''_0=\dddd{1}{h^2}\llll(y_0-2y_1+y_2 \rrrr)+O\llll(h^2\rrrr),$$
$$y'_n=\dddd{1}{h}\llll(\dddd{3}{2}y_n-2y_{n-1}+\dddd{1}{2}y_{n-2}\rrrr)+O\llll(h^2\rrrr),\; y''_n=\dddd{1}{h^2}\llll(y_n-2y_{n-1}+y_{n-2}\rrrr)+O\llll(h\rrrr),$$
we obtain the approximation for the Caputo derivative of order $3-\aaaa$
\begin{align}\label{34_10}
\dddd{1}{\GGGG(-\aaaa)h^\aaaa}\sum_{k=0}^{n}\dddddd_k^{(\aaaa)} y_{n-k}= y_n^{(\aaaa)}+O\llll(h^{3-\aaaa}\rrrr),
\end{align}
where
$$\dddddd_0^{(\aaaa)}=-\zzzz(1+\aaaa)+\dddd{3}{2}\zzzz(\aaaa)-\dddd{1}{2}\zzzz(\aaaa-1),\;\dddddd_1^{(\aaaa)}=1-2\zzzz(\aaaa)+\zzzz(\aaaa-1),$$
$$\dddddd_2^{(\aaaa)}=\dddd{1}{2^{1+\aaaa}}+\dddd{1}{2}\zzzz(\aaaa)-\dddd{1}{2}\zzzz(\aaaa-1),\;\dddddd_k^{(\aaaa)}=\dddd{1}{k^{1+\aaaa}}\quad (k=3,\cdots,n-3),$$
$$\dddddd_{n-2}^{(\aaaa)}=\oooo_{n-2}^{(\aaaa)}+\dddd{1}{2}K_1(\aaaa)-K_2(\aaaa),\;\dddddd_{n-1}^{(\aaaa)}=\oooo_{n-1}^{(\aaaa)}-2K_1(\aaaa)+2K_2(\aaaa),$$
$$\dddddd_n^{(\aaaa)}=\oooo_n^{(\aaaa)}+\dddd{3}{2}K_1(\aaaa)-K_2(\aaaa).$$
When $n=2$, approximation \eqref{34_10} has weights
$$
\dddddd_0^{(\aaaa)}=\dddd{ (\aaaa+2)}{2^{\aaaa}(2-\aaaa) (\aaaa-1)},\; \dddddd_1^{(\aaaa)}=\dddd{4}{2^\aaaa (2-\aaaa) (1-\aaaa)},\; \dddddd_2^{(\aaaa)}=\dddd{ 2-3 \aaaa}{2^{\aaaa}(2-\aaaa) (1-\aaaa) \aaaa}.$$
When $n=3$: $\dddddd_0^{(\aaaa)}=-\frac{1}{2}\zeta (\aaaa-1)+\frac{3 }{2}\zeta (\aaaa)-\zeta (\aaaa+1),$
$$\dddddd_1^{(\aaaa)}=\dddd{3 }{2}\zeta (\aaaa-1)+3 \zeta (\aaaa+1)-\dddd{9 }{2}\zeta (\aaaa)-\dddd{3^{1-\aaaa} (\aaaa+4)}{2 \aaaa (1-\aaaa) (2-\aaaa)},$$
$$ \dddddd_2^{(\aaaa)}=-\dddd{3}{2}\zeta (\aaaa-1)+\dddd{9 }{2}\zeta (\aaaa)-3 \zeta (\aaaa+1)+\dddd{2\times 3^{1-\aaaa} (\aaaa+1)}{\aaaa(1-\aaaa) (2-\aaaa)},$$
$$\dddddd_3^{(\aaaa)}=\dddd{1}{2}\zeta (\aaaa-1)+\zeta (\aaaa+1)-\dddd{3}{2}\zeta (\aaaa)-\dddd{3^{2-\aaaa}}{2 (1-\aaaa) (2-\aaaa)}.$$
In the previous sections computed the numerical solutions of the fractional relaxation equation of order $1-\aaaa,2-\aaaa$ and $2$ using the second-order approximation \eqref{19_20} for the value $y(h)$. In order to obtain a third-order approximation for  $y(h)$ we need to compute the values of $y'(0)$ and $y''(0)$. 
By applying fractional differentiation of order $1-\aaaa$ to equation \eqref{19_10} we obtain
\begin{align}\label{34_20}
y'(x)+y^{(1-\aaaa)}(x)=F^{(1-\aaaa)}(x).
\end{align}
Then $y'(0)=F^{(1-\aaaa)}(0)$. In \cite{Dimitrov2016} we showed that
$$\dddd{d}{dx}y^{(1-\aaaa)}(x)=y^{(2-\aaaa)}(x)+\dddd{y'(0)x^{\aaaa-1}}{\GGGG(\aaaa)}.$$
By differentiating \eqref{34_20} we obtain
$$y''(x)+\dddd{d}{dx}y^{(1-\aaaa)}(x)=\dddd{d}{dx}F^{(1-\aaaa)}(x),$$
$$y''(x)+y^{(2-\aaaa)}(x)+\dddd{y'(0)x^{\aaaa-1}}{\GGGG(\aaaa)}=\dddd{d}{dx}F^{(1-\aaaa)}(x).$$
When the solution of equation \eqref{19_10} has a bounded second derivative, the value of $y''(0)$ is computed with the formula
$$y''(0)=\lim_{x\rightarrow 0}\llll(\dddd{d}{dx}F^{(1-\aaaa)}(x)-\dddd{y'(0)x^{\aaaa-1}}{\GGGG(\aaaa)}\rrrr).$$
Now we compute the values of the derivatives $y'(0),y''(0)$ of the exact solutions of Equation I, Equation II and Equation III.
$$F_1(x)=\sum_{k=0}^4 x^k+\sum_{k=1}^4 \dddd{k! x^{k-\aaaa}}{\GGGG(k+1-\aaaa)},$$
$$F_1^{(1-\aaaa)}(x)=\sum_{k=0}^3 \dddd{x^{k+\aaaa}}{\GGGG(k+1-\aaaa)}+\sum_{k=0}^3 (k+1) x^k.$$
Then $y'(0)=F_1^{(1-\aaaa)}(0)=1$.
$$\dddd{d }{d x}F_1^{(1-\aaaa)}(x)=\sum_{k=0}^3 \dddd{x^{k+\aaaa-1}}{\GGGG(k-\aaaa)}+\sum_{k=1}^3 k(k+1) x^{k-1}.$$
Then
\begin{align*}
y''(0)=\lim_{x\rightarrow 0}\Bigg(\dddd{d}{dx}&F_1^{(1-\aaaa)}(x)-\dddd{x^{\aaaa-1}}{\GGGG(\aaaa)}\Bigg)\\
&=\lim_{x\rightarrow 0}\sum_{k=1}^3 \llll(\dddd{x^{k+\aaaa-1}}{\GGGG(k-\aaaa)}+ k(k+1) x^{k-1}\rrrr)=2
\end{align*}
The exact solution of Equation I satisfies $y'(0)=1$ and  $y''(0)=2$.
$$F_{2}(x)=e^x+x^{1-\aaaa}E_{1,2-\aaaa}(x)=\sum_{k=0}^\infty \dddd{x^k}{k!}+\sum_{k=1}^4 \dddd{ x^{k+1-\aaaa}}{\GGGG(k+2-\aaaa)},$$
$$F_{2}^{(1-\aaaa)}(x)=\sum_{k=1}^\infty \dddd{x^{k-1+\aaaa}}{\GGGG(k+\aaaa)}+\sum_{k=0}^\infty \dddd{x^k}{k!}=\sum_{k=1}^\infty \dddd{x^{k-1+\aaaa}}{\GGGG(k+\aaaa)}+e^x.$$
Then $y'(0)=F_{2}^{(1-\aaaa)}(0)=1$.
$$\dddd{d }{dx}F_{2}^{(1-\aaaa)}(x)=\sum_{k=1}^\infty \dddd{x^{k-2+\aaaa}}{\GGGG(k+\aaaa-1)}+e^x,$$
$$y''(0)=\lim_{x\rightarrow 0}\llll(\dddd{d}{dx}F_{2}^{(1-\aaaa)}(x)-\dddd{x^{\aaaa-1}}{\GGGG(\aaaa)}\rrrr)=\lim_{x\rightarrow 0}\llll(\sum_{k=2}^\infty \dddd{x^{k-2+\aaaa}}{\GGGG(k+\aaaa-1)}+e^x\rrrr)=1.$$
The exact solution of Equation II satisfies $y'(0)=y''(0)=1$.
\begin{align*} 
F_{3}(x)=\cos (2\pi x)+i \pi x^{1-\aaaa}
\llll(E_{1,2-\aaaa}(2\pi i x)-E_{1,2-\aaaa}(-2\pi i x)\rrrr),
\end{align*}
$$F_{3}(x)=\sum_{k=0}^\infty \dddd{\llll(-4\pi^2 \rrrr)^k x^{2k}}{(2k)!}+\sum_{k=1}^\infty \dddd{\llll(-4\pi^2 \rrrr)^k x^{2k-\aaaa}}{\GGGG(2k+1-\aaaa)},$$
$$F_{3}^{(1-\aaaa)}(x)=\sum_{k=1}^\infty \llll(-4\pi^2 \rrrr)^k\llll( \dddd{ x^{2k-1+\aaaa}}{\GGGG(2k+\aaaa)}+\dddd{ x^{2k-1}}{(2k-1)!} \rrrr).$$
Then $y'(0)=F_{3}^{(1-\aaaa)}(0)=0$.
$$\dddd{d }{dx}F_{3}^{(1-\aaaa)}(x)=\sum_{k=1}^\infty \llll(-4\pi^2 \rrrr)^k\llll( \dddd{ x^{2k-2+\aaaa}}{\GGGG(2k-1+\aaaa)}+\dddd{ x^{2k-2}}{(2k-2)!} \rrrr),$$
$$y''(0)=\lim_{x\rightarrow 0}\dddd{d}{dx}F_{3}^{(1-\aaaa)}(x)=-4\pi^2.$$
The exact solution of Equation III satisfies $y'(0)=0,y''(0)=-4\pi^2$. Let
$$\tilde{y}_1=y(0)+y'(0)h+y''(0)h^2/2.$$
From Taylor's formula $\tilde{y}_1$ is an approximation for $y(h)$ with accuracy $O\llll(h^3\rrrr)$.
The accuracy of numerical solution  $NS[45]$  with initial values $u_0=y_0,u_1=\tilde{y}_1$ is $O\llll( h^{3-\aaaa}\rrrr)$. In Table 10 we compute the maximum error and the order of numerical solution $NS[45]$ of Equation I, Equation II  and Equation III. In Figure 3 we compare numerical solutions $NS[34],NS[40]$ and $NS[45]$ of Equation III and $\aaaa=0.6$.
	\begin{figure}[t!]
  \centering
  \caption{Graph of the exact solution of Equation III  and  numerical solutions $NS[12]$-red,  and $NS[13]$-blue, and $NS[34]$-green for $h=0.1$ and $\alpha=0.6$.}
  \includegraphics[width=0.55\textwidth]{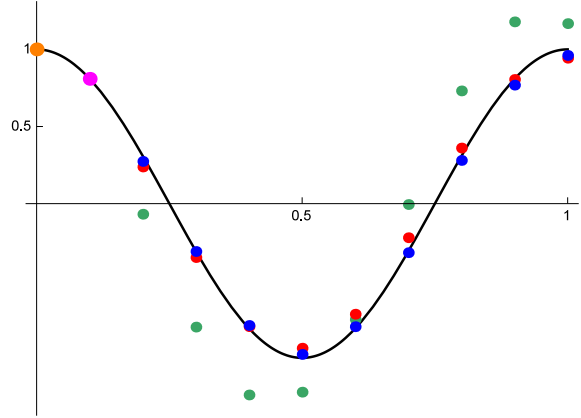}
\end{figure}
\setlength{\tabcolsep}{0.5em}
{ \renewcommand{\arraystretch}{1.1}
\begin{table}[ht]
	\caption{Error and order of numerical solution $NS[13]$ of Equation I with $\aaaa=0.25$, Equation II with $\aaaa=0.5$ and Equation III with $\aaaa=0.75$.}
	\small
	\centering
  \begin{tabular}{ l | c  c | c  c | c  c }
		\hline
		\hline
		\multirow{2}*{ $\quad \boldsymbol{h}$}  & \multicolumn{2}{c|}{{\bf Equation I}} & \multicolumn{2}{c|}{{\bf Equation II}}  & \multicolumn{2}{c}{{\bf Equation III}} \\
		\cline{2-7}  
   & $Error$ & $Order$  & $Error$ & $Order$  & $Error$ & $Order$ \\ 
		\hline \hline
$0.003125$    & $7.9\times 10^{-8}$  & $2.7458$  & $7.6\times 10^{-8}$   & $2.4945$    & $0.00003552$         & $2.2495$       \\ 
$0.0015625$   & $1.2\times 10^{-8}$  & $2.7479$  & $1.3\times 10^{-8}$   & $2.4973$    & $7.5\times 10^{-6}$  & $2.2499$       \\ 
$0.00078125$  & $1.7\times 10^{-9}$  & $2.7490$  & $2.4\times 10^{-9}$   & $2.4987$    & $1.6\times 10^{-6}$  & $2.2500$        \\ 
$0.000390625$ & $2.6\times 10^{-10}$ & $2.7495$  & $4.2\times 10^{-10}$  & $2.4993$    & $3.3\times 10^{-7}$  & $2.2500$        \\
\hline
  \end{tabular}
	\end{table}
	}
\section{Conclusions}
In the present paper we obtained approximations of  the Caputo derivative of order $2-\aaaa,2$ and $3-\aaaa$ whose weights consist of terms which have power $-\aaaa$ and $-1-\aaaa$. In all experiments the accuracy of the numerical solutions  which use approximation \eqref{6_20} for the Caputo derivative is higher than the accuracy of the numerical solutions using approximations \eqref{3_10} and \eqref{5_30}. A question for future work is to construct an approximation of order $2-\aaaa$ whose weights consist of terms which have power $-\aaaa$, where the accuracy of the numerical solutions of Equation I, Equation II and Equation III is higher than the accuracy of the numerical solutions using the $L1$ approximation \eqref{3_10}. 

\end{document}